\newtheorem{prethm}{{\bf Theorem}}
\newenvironment{thm}{\begin{prethm}\sl{\hspace{-0.5
               em}{\bf.}}}{\end{prethm}}
\newtheorem{prepro}[prethm]{{\bf Proposition}}
\newtheorem{prelem}[prethm]{{\bf Lemma}}
\newenvironment{lem}{\begin{prelem}\sl{\hspace{-0.5
               em}{\bf.}}}{\end{prelem}}
\newtheorem{precor}[prethm]{{\bf Corollary}}
\newenvironment{cor}{\begin{precor}\sl{\hspace{-0.5
               em}{\bf.}}}{\end{precor}}
\newtheorem{preconj}[prethm]{{\bf Conjecture}}
\newtheorem{preremark}[prethm]{{\bf Remark}}
\newtheorem{preexample}[prethm]{{\bf Example}}
\newtheorem{prethmm}{{\bf Theorem}}
\newtheorem{preproof}{{\bf\textsf{Proof.}}}
\newenvironment{proof}[1]{\begin{preproof}{\rm
               #1}\hfill{$\square$}}{\end{preproof}}
\newcommand{\bmi}[1]{\mbox{\boldmath $ #1$}}
\def\mathbi#1{\textbf{\em#1}}
\DeclareMathAlphabet{\mathpzc}{OT1}{pzc}{m}{it}
\title{\bf\Large  On a family of diamond-free strongly regular graphs}
\author{{\sc\normalsize  A. Mohammadian \quad\quad\quad B. Tayfeh-Rezaie}\vspace{2mm}
\\{\footnotesize{\sl School of Mathematics, Institute for Research in Fundamental Sciences$\,$(IPM),}}\vspace{-1.5mm}
\\{\footnotesize{\sl  P.O. Box 19395-5746, Tehran, Iran}}\vspace{2mm}
\\{\footnotesize{$\mathsf{ali\_m@ipm.ir}$ \quad\quad\quad  $\mathsf{tayfeh}$-$\mathsf{r@ipm.ir}$}\vspace{1cm}}}
\date{}
\begin{document}
\maketitle

\begin{abstract}
{\small
The existence of  a partial quadrangle  ${\mathsf{PQ}}(s, t, \mu)$ is equivalent  to the  existence of a  diamond-free  strongly regular graph ${\mathsf{SRG}}(1+s(t+1)+s^2t(t+1)/\mu, s(t+1), s-1, \mu)$.  Recently,  it is shown   that there exists a  ${\mathsf{PQ}}(2, (n^3+3n^2-2)/2, n^2+n)$
if and only if   $n\in\{1, 2, 4\}$. Let $\mathcal{S}$  be a  ${\mathsf{PQ}}(3,(n+3)(n^2-1)/3, n^2+n)$ such that for every  two non-collinear points $p_1$ and $p_2$, there is a point $q$ non-collinear with  $p_1$, $p_2$,  and all  points collinear with both $p_1$ and $p_2$. In this article, we establish  that  $\mathcal{S}$ exists only for  $n\in\{-2, 2, 3\}$ and probably $n=10$.
}

\vspace{5mm}

\noindent{\small \mathbi{Key words and phrases}:    adjacency matrix,  eigenvalue multiplicity,  automorphism group,  diamond-free graph,   negative Latin square graph, partial quadrangle, strongly regular graph, transitive graph.}

\vspace{3mm}

\noindent{\small \mathbi{AMS  Mathematics Subject Classif{}ication\,(2010)}: 05B25, 05C25, 05C50, 05E30.}
\end{abstract}

\vspace{5mm}

\section*{I. Introduction}

A {\sl strongly regular graph} with parameters $(\nu, k, \lambda, \mu)$, denoted by  ${\mathsf{SRG}}(\nu, k, \lambda, \mu)$,  is a regular graph of order  $\nu$ and valency  $k$  such that  $\mathsf{(i)}$ it is not complete or edgeless,  $\mathsf{(ii)}$   every two adjacent vertices have $\lambda$ common neighbors,  and $\mathsf{(iii)}$ every two non-adjacent vertices have $\mu$ common neighbors.
The concept of strongly regular graphs was f{}irst introduced
by  Bose and  Shimamoto in  \cite{bos}.
Strongly regular graphs form an important class of graphs and  lie
somewhere between highly structured graphs  and  apparently random graphs.
They  often appear in dif{}ferent areas such as  coding theory, design theory,
discrete  geometry,  group theory, and so on.
Obviously,  complete multipartite graphs with  equal part sizes  and their complements   are trivial examples of strongly regular graphs. To exclude these examples, we assume that a strongly regular graph and its complement are connected; or  equivalently, $0<\mu<k<\nu-1$.

The {\sl adjacency matrix}  of a graph    $G$, denoted by  $\EuScript{A}_G$,  has  its   rows and
columns  indexed by  the vertex set of $G$   and its     $(i, j)$-entry  is $1$ if the vertices $i$ and
$j$ are adjacent and $0$ otherwise.
The zeros of the characteristic polynomial of $\EuScript{A}_G$
are called the {\sl eigenvalues} of $G$.
The statement that $G$ is an ${\mathsf{SRG}}(\nu, k, \lambda, \mu)$ is
equivalent to
$$\EuScript{A}_GJ_\nu=kJ_\nu \quad \text{and} \quad  \EuScript{A}_G^2 + (\mu-\lambda)\EuScript{A}_G+ (\mu - k)I_\nu=\mu J_\nu,$$
where $I_t$ and $J_t$ are  the $t\times t$ identity matrix and the $t\times t$ all one matrix, respectively.
It is easy to verify    that the  eigenvalues of an  ${\mathsf{SRG}}(\nu, k, \lambda, \mu)$ are
\begin{align*}
&k,   \textrm{ with the multiplicity } 1; \\
&r=\frac{\lambda-\mu+\mathnormal{\Delta}}{2},   \textrm{ with the multiplicity  } f=\frac{\nu-1}{2}-\frac{2k+(\nu-1)(\lambda-\mu)}{2\mathnormal{\Delta}}; \\
&s=\frac{\lambda-\mu-\mathnormal{\Delta}}{2},  \textrm{ with the  multiplicity  } g=\frac{\nu-1}{2}+\frac{2k+(\nu-1)(\lambda-\mu)}{2\mathnormal{\Delta}},
\end{align*} where $\mathnormal{\Delta}=\sqrt{(\lambda-\mu)^2 + 4(k-\mu)}$. It is well known that the second largest eigenvalue of a graph $G$ is non-positive if and only if the non-isolated vertices of $G$ form a complete multipartite graph.  Also, it is a known fact that  the smallest eigenvalue of a graph $G$ is at least  $-1$ if and only if $G$ is a  disjoint union of some complete graphs.  So, for any ${\mathsf{SRG}}(\nu, k, \lambda, \mu)$, we necessarily have $r>0$ and $s<-1$.

The {\sl diamond} is the graph on four vertices with f{}ive edges. A graph with  no  diamond as an induced subgraph is called {\sl diamond-free}. It is straightforward  to see   that a graph is diamond-free if and only if the neighborhood of any vertex is a  disjoint union of some complete graphs.  Furthermore, an ${\mathsf{SRG}}(\nu, k, \lambda, \mu)$ is diamond-free if and only if $\lambda+1\, |\, k$ and  the neighborhood of each  vertex is $\tfrac{k}{\lambda+1}K_{\lambda+1}$.

A {\sl partial quadrangle} with parameters $(s, t, \mu)$, denoted by  ${\mathsf{PQ}}(s, t, \mu)$,  is an incidence structure $(\mathcal{P}, \mathcal{L}, \mathcal{I})$
in which  $\mathcal{P}$ and $\mathcal{L}$ are disjoint
non-empty sets of elements called points and lines, respectively, and  $\mathcal{I}\subseteq(\mathcal{P}\times\mathcal{L})\cup(\mathcal{L}\times\mathcal{P})$ is a symmetric    incidence relation satisfying the following conditions:
\vspace{-1mm}
\begin{itemize}
\item[$\mathsf{(i)}$] Each line is incident with $s+1$ points and each point is incident with $t+1$ lines.
\vspace{-2mm}
\item[$\mathsf{(ii)}$] Every two distinct points are incident with at most one line.
\vspace{-2mm}
\item[$\mathsf{(iii)}$] For  each  non-incident pair   $(p, \ell)\in\mathcal{P}\times\mathcal{L}$, there is  at most one  pair $(p', \ell')\in\mathcal{P}\times\mathcal{L}$ such that the both     $p, p'$ are  incident with $\ell'$ and  $p'$ is incident with $\ell$.
    \vspace{-2mm}
\item[$\mathsf{(iv)}$] For every  two non-collinear points, there are exactly $\mu$ points collinear with both  of them.
\end{itemize}
\vspace{-1mm}
Partial quadrangles were f{}irstly  introduced  by  Cameron in \cite{cam}.
Clearly,  for  any  ${\mathsf{PQ}}(s, t, \mu)$,  we necessarily have $\mu\leqslant t+1$. In the literature, a  ${\mathsf{PQ}}(s, t, t+1)$ is called a {\sl generalized quadrangle} and is denoted by  ${\mathsf{GQ}}(s, t)$.
The {\sl collinearity graph}
of a ${\mathsf{PQ}}(s, t, \mu)$ is the graph whose vertices are the
points and two vertices are adjacent if they are collinear.
It is straightforward to verify   that the collinearity graph
of a ${\mathsf{PQ}}(s, t, \mu)$  is a diamond-free $${\mathsf{SRG}}\left(1+s(t+1)+\frac{s^2t(t+1)}{\mu}, s(t+1), s-1, \mu\right).$$
Inversely,  a diamond-free strongly regular graph is the collinearity graph   of a  partial quadrangle
whose points are vertices of the graph and lines are  maximal cliques of the graph. So, an ${\mathsf{SRG}}(\nu, k, \lambda, \mu)$  with  $\lambda\leqslant1$ or $\mu=1$ is the collinearity graph of a partial quadrangle.

Recently,   Bondarenko and   Radchenko  showed  in \cite{bon}  that  a ${\mathsf{PQ}}(2, (n^3+3n^2-2)/2, n^2+n)$,    or equivalently, an ${\mathsf{SRG}}((n^2+3n-1)^2,n^2(n+3),1,n(n+1))$,  exists if and only if   $n\in\{1,2,4\}$. Let $\mathcal{S}$  be a  ${\mathsf{PQ}}(3,(n+3)(n^2-1)/3, n^2+n)$ such that for every  two non-collinear points $p_1$ and $p_2$, there is a point $q$ non-collinear with  $p_1$, $p_2$,  and all  points collinear with both $p_1$ and $p_2$. In this article, we will show   that if  $\mathcal{S}$ exists, then  $n\in\{-2, 2, 3, 10\}$. Equivalently, we will establish  the following theorem.
\begin{thm}\label{main}
If there exists a diamond-free ${\mathsf{SRG}}((n^2+3n-2)^2, n(n^2+3n-1), 2, n(n+1))$, for some integer  $n$,  satisfying the following   condition:
\begin{equation}\label{con}
\begin{array}{cc}
\text{For every two non-adjacent vertices $u$ and  $v$, there is  a  vertex that} \\
\text{\hspace{-11mm} is not  adjacent to   $u$, $v$, and all common neighbors of $u$ and $v$,}
\end{array}
\end{equation}
then    $n\in\{-2, 2, 3, 10\}$.
\end{thm}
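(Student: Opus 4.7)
My plan is to combine spectral feasibility with a careful exploitation of condition~(\ref{con}) in order to reduce the problem to a Diophantine equation in $n$ whose integer solutions are precisely the claimed set. To begin, I would compute the spectrum of any hypothetical such graph. Plugging the parameters into the formulas of the preamble gives
\[
\Delta=\sqrt{(\lambda-\mu)^2+4(k-\mu)}=n^2+3n-2,\quad r=n,\quad s=-(n^2+2n-2),
\]
and $\nu=\Delta^2$. Since $\Delta$ is rational the conference case is excluded, and the multiplicities $f$ and $g$ become explicit rational functions in $n$; demanding that both be non-negative integers furnishes a first batch of divisibility and sign restrictions. I would also record the Krein inequalities and the absolute bound, each of which may further trim small values.

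Next I would use the local geometry. Diamond-freeness together with $\lambda=2$ forces every neighborhood $N(v)$ to be a disjoint union of $k/3=n(n^2+3n-1)/3$ triangles, which are precisely the $t+1$ lines through $v$ of the underlying $\mathsf{PQ}(3,t,\mu)$ with $\mu=n(n+1)$. Condition~(\ref{con}) then furnishes, for every non-adjacent pair $u,v$ with common neighborhood $C$ of size $\mu$, a vertex $w$ non-adjacent to each of $u$, $v$, and every element of $C$. Setting $S=\{u,v,w\}\cup C$, I would decompose $\chi_S$ along the eigenspaces of $\EuScript{A}_G$ as $\chi_S=\alpha\mathbf{1}+\chi_S^{(r)}+\chi_S^{(s)}$. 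The incidences prescribed by~(\ref{con}) determine both $\langle\chi_S,\EuScript{A}_G\chi_S\rangle$ and $\langle\chi_S,\EuScript{A}_G^2\chi_S\rangle$ exactly, and matching these against the spectral side yields two polynomial identities linking $n$ to the squared norms of the projections of $\chi_S$ onto the $r$- and $s$-eigenspaces.

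The main obstacle, and the heart of the argument, will be closing these identities into a \emph{single} polynomial equation in $n$. My expectation is that by iterating condition~(\ref{con}), for instance by applying it to another non-adjacent pair inside $S$, or by enlarging $S$ with vertices adjacent to exactly one of $u$ and $v$, one can eliminate the unknown norms $\|\chi_S^{(r)}\|^2$ and $\|\chi_S^{(s)}\|^2$ and obtain a polynomial of modest degree that must vanish. Combined with the integrality and bound constraints already obtained, its integer roots should be exactly $n\in\{-2,2,3,10\}$. The delicate point is to choose an auxiliary structure rich enough that the resulting identity is non-trivial yet small enough for its spectral moments to be computable in closed form; in addition, a handful of small or degenerate values of $n$ that survive every general inequality will likely need to be excluded by separate ad hoc arguments, probably by direct inspection of the eigenvalue multiplicities or by invoking the classification results on small diamond-free strongly regular graphs.
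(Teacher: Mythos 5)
There is a genuine gap, and it sits exactly where you place ``the heart of the argument.'' The local information you propose to extract --- the spectral moments $\langle\chi_S,\EuScript{A}_G\chi_S\rangle$ and $\langle\chi_S,\EuScript{A}_G^{2}\chi_S\rangle$ for sets $S$ built from a non-adjacent pair, its common neighborhood, and the witness vertex supplied by condition (\ref{con}) --- is equivalent to the double-counting system (\ref{sys}) that the paper sets up, and that system turns out to have a \emph{valid} non-negative integer solution for every $n\geqslant 3$ (the paper's Lemma \ref{sol} computes it explicitly: $m_0=2$, $m_n=n(n+2)(n^2-1)$, $m_{n+1}=2n(n^2-4)$, $m_{n+2}=n(n+1)$, all other $m_i=0$). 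So no elimination of the unknown projections $\|\chi_S^{(r)}\|^2$, $\|\chi_S^{(s)}\|^2$, and no enlargement of $S$ by finitely many further applications of (\ref{con}), will produce a polynomial in $n$ that vanishes only at $\{-2,2,3,10\}$: local counting and the standard feasibility conditions (integrality, Krein, absolute bound) are all consistent for infinitely many $n$. Indeed the exceptional values $3$ and $10$ are not roots of any natural polynomial here; they are the solutions with $n\geqslant 3$ of the \emph{exponential} Diophantine equation $n^2+3n-2=2^t$, i.e.\ $(2n+3)^2=2^{t+2}+17$, resolved by Beukers's theorem on the generalized Ramanujan--Nagell equation. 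A purely polynomial identity cannot encode the condition ``$\,n^2+3n-2$ is a power of $2$.''

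The missing idea is global and group-theoretic. The paper uses the solved counting system to show that $\overline{N}(u)$ carries a canonical partition $\mathnormal{\Psi}(u)$ into independent triples dual to the triangle partition $\mathnormal{\Phi}(u)$ of $N(u)$, and that the matching pattern between these triples is rigid enough (Lemmas \ref{cij2} and \ref{2cliq}) to define, for each vertex $u$, an order-$3$ automorphism $\sigma_u$ of $\mathbb{G}$. It then proves the compatibility relation $\sigma_u(v)=\sigma_v^{-1}(u)$, shows each product $\sigma_u\sigma_v^{-1}$ has order $2$ (using the fixed-point bound of Theorem \ref{fixsrg}), and concludes that these products generate an Abelian transitive automorphism group of $2$-power order; the orbit--stabilizer theorem then forces $\nu=(n^2+3n-2)^2$ to be a power of $2$, and Beukers's result finishes the proof. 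Your spectral set-up is a reasonable way to rederive (\ref{pq}) and (\ref{sys}), but to close the argument you would need to replace your hoped-for polynomial identity with this construction of a transitive $2$-group of automorphisms (or something equally global).
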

\vspace{-1mm}
In each of two cases $n=-2$ and $n=2$, there is a unique diamond-free strongly regular graph  \cite{hae}. For  $n=3$, we are aware of  only one  diamond-free strongly regular graph which is found  in \cite{van}. Note   that all these three examples satisfy (\ref{con}). The question whether there exists  a diamond-free strongly regular graph for $n=10$ is left as an open problem. Finally,
we  believe that  Theorem \ref{main} holds without assuming  the condition  (\ref{con}).

\section*{II. Notation and Preliminaries}

We f{}irst  recall  some  notation from  graph  theory. For a graph $G$, the vertex set of $G$ is denoted by $V(G)$. We employ  the notation $u\thicksim v$ when  two vertices $u, v\in V(G)$ are adjacent.  For any  vertices $v_1, \ldots, v_t\in V(G)$, we let  $$N(v_1, \ldots, v_t)=\{x\in V(G)\, | \, x\thicksim v_i, \text{ for } i=1, \ldots, t\}.$$ For every two subsets $S$ and $T$ of    $V(G)$, we denote   by $\langle S, T\rangle$ the induced subgraph of $G$ on all edges with one   endpoint in $S$ and the other endpoint  in $T$. For simplicity, we  will use the notation   $N[v]$,  $\overline{N}(v)$, and $\langle S\rangle$ instead of  $N(v)\cup\{v\}$, $V(G)\setminus(N(v)\cup\{v\})$,    and $\langle S, S\rangle$, respectively.

It is a simple  and well known  fact   that  a  strongly regular graph  whose valency  is equal to the multiplicity of a non-principal eigenvalue   is either a conference graph, that is an ${\mathsf{SRG}}(n, (n-1)/2, (n-5)/4, (n-1)/4)$,  or an
\begin{eqnarray}\label{g=k}
{\mathsf{SRG}}((n^2+3n-\lambda)^2, n(n^2+3n-\lambda+1), \lambda, n(n+1)),
\end{eqnarray}
for some integer $n$; depending  on $f=g$ or not.
Let $G$ be a graph of the family given by  (\ref{g=k}).
The eigenvalues  of $G$ are  $n$ with the multiplicity $\nu-1-k$ and $\lambda-n^2-2n$ with the multiplicity  $k$. Traditionally,
if $n>0$, then $g=k$ and  $G$ is called a {\sl negative Latin square} graph  and if $n<0$, then $f=k$ and $G$ is called a {\sl pseudo Latin square} graph. Note that if $n<0$, then  $\lambda-n^2-2n>0$ and so $n>-1-\sqrt{1+\lambda}$. This means that,  for a f{}ixed parameter    $\lambda$, there are only  f{}initely many  strongly regular graphs with $f=k$. In this article, we only   deal with strongly regular graphs with $f\neq g$ and  $g=k$.

Let  $G$ be a diamond-free  ${\mathsf{SRG}}(\nu, k, \lambda, \mu)$ in  the family    (\ref{g=k}) with  $0\leqslant\lambda\leqslant n-1$. Fix a  vertex $u\in V(G)$ and  assume that $\langle N(u)\rangle=sK_{\lambda+1}$,  where $s=k/(\lambda+1)$. Letting $H=\langle N[u]\rangle$, we may write
\begin{equation} \label{xy}
\EuScript{A}_G=\left[
\begin{array}{cc}
X & Y \\
Y^\top  & \EuScript{A}_H$$ \\
\end{array}
\right],
\end{equation}
for some matrices $X$ and  $Y$.
Since $\lambda\leqslant n-1$, $n$ is not an eigenvalue of $H$.
With an easy calculation, we f{}ind that
\begin{equation} \label{inv}
n(n+1)^2(n-\lambda)(nI_{k+1}-\EuScript{A}_H)^{-1}=\left[
\begin{array}{c|c}
(aI_{\lambda+1}+\mu J_{\lambda+1})\otimes I_s & b\bmi{j}_k \\\hline
b\bmi{j}_k^\top  & c \\
\end{array}
\right]-J_{k+1},
\end{equation}
where
$a=\mu(n-\lambda)$,  $b=\lambda+1-n$,  $c=(\lambda+1-n)(n+1-\lambda)$, and $\bmi{j}_k$  is    the all one column vector of  length $k$.
For every two vertices $v, w\in\overline{N}(u)$, let $p_u(v, w)=|N(u, v, w)|$ and  $q_u(v, w)$ be the number of pairs  $x\thicksim y$ with  $x\in N(u, v)$ and $y\in N(u, w)$.
Since $g=k$, we   have   $\text{\sl rank}\,(nI_\nu-\EuScript{A}_G)=\text{\sl rank}\,(nI_{k+1}-\EuScript{A}_H)$, which implies by (\ref{xy}) that
\begin{equation}\label{star}
nI_\nu-X=Y(nI_{k+1}-\EuScript{A}_H)^{-1}Y^\top.
\end{equation}
Using (\ref{inv}) and  (\ref{star}),  it is not  hard to see that
\begin{eqnarray}\label{pq}
(n-\lambda+1)p_u(v, w)+q_u(v, w)=\left\{\begin{array}{ll}
   \lambda(n+1), & \text{if $v\thicksim w$}; \\
    \mu, & \text{otherwise},
\end{array}\right.
\end{eqnarray}
for every two vertices $v, w\in\overline{N}(u)$.

Now,  f{}ix a  vertex  $v\in\overline{N}(u)$ and set $t=\lfloor\mu/(n-\lambda+1)\rfloor$.
For $i=0, 1, \ldots, t$, let $M_i(u, v)$ be the set  of all vertices  $x\not\in N[u]\cup N[v]$ with $p_u(v, x)=i$, and put  $m_i(u, v)=|M_i(u, v)|$. By a double counting argument, it is straightforward to f{}ind that
\begin{eqnarray}\label{sys}
\left\{\begin{array}{l}
\displaystyle{\sum_{i=0}^tm_i(u, v)=\nu-2k+\mu-2}; \\
\vspace{-5mm}\\
\displaystyle{\sum_{i=0}^tim_i(u, v)=\mu(k-2\lambda-2)}; \\
\vspace{-4mm}\\
\displaystyle{\sum_{i=0}^t{{i} \choose {2}}m_i(u, v)=(\mu-2){{\mu} \choose {2}} }.
\end{array}\right.
\end{eqnarray}
Notice  that $G$ satisf{}ies (\ref{con}) if and only if $m_0(u, v)\neq0$ for every two   non-adjacent vertices   $u,  v\in V(G)$.

\section*{III. The Proof of Theorem \ref{main}}

In this section, we give a proof of   Theorem \ref{main}. Let $\mathbb{G}$ be a diamond-free   ${\mathsf{SRG}}((n^2+3n-2)^2, n(n^2+3n-1), 2, n(n+1))$,  for some integer $n\geqslant3$, satisfying  (\ref{con}). We will demonstrate  that either  $n=3$ or $n=10$. In the following lemma, we solve the system (\ref{sys})  for  each pair   $u\nsim v$ of vertices of   $\mathbb{G}$.
For any   vertex $u\in V(\mathbb{G})$, we  denote by $\mathnormal{\Phi}(u)$ the partition  of $N(u)$    into  cliques of size $3$.

\begin{lem}\label{sol}
For every  two  non-adjacent vertices   $u,  v\in V(\mathbb{G})$,
the system (\ref{sys}) has the unique  solution
\begin{eqnarray}\label{uniq}
\left\{\begin{array}{l}
m_0(u, v)=2; \\
m_1(u, v)=\cdots=m_{n-1}(u, v)=0; \\
m_n(u, v)=n(n+2)(n^2-1);\\
m_{n+1}(u, v)=2n(n^2-4);\\
m_{n+2}(u, v)=n(n+1).
\end{array}\right.
\end{eqnarray}
Moreover, if $M_0(u, v)=\{a, b\}$, for some vertices $a, b\in V(\mathbb{G})$, then $a\nsim b$,    $p_u(a, b)=0$, and  any  element of $\mathnormal{\Phi}(u)$ which meets $N(v)$, also meets both  $N(a)$ and $N(b)$.
\end{lem}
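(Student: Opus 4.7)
\emph{Sketch of the proof strategy.} The plan rests on a concrete combinatorial incarnation of diamond-freeness. Since $\lambda=2$, the neighborhood partition $\mathnormal{\Phi}(u)$ consists of triangles, and any non-neighbor $y$ of $u$ can meet each triangle $K\in\mathnormal{\Phi}(u)$ in at most one vertex (otherwise $\{u,y\}$ together with two hit vertices of $K$ would induce a diamond). Writing $V_y\subseteq\mathnormal{\Phi}(u)$ for the set of triangles met by $y$, so that $|V_y|=\mu$, the quantity $q_u(v,w)$ of (\ref{pq}) counts ordered edges $(x,y)$ with $x\in N(u,v)$ and $y\in N(u,w)$; such edges occur precisely at triangles in $V_v\cap V_w$ where $v$ and $w$ pick distinct vertices, giving $q_u(v,w)=|V_v\cap V_w|-p_u(v,w)$. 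Substituting into (\ref{pq}) for non-adjacent $v,w$ produces the core identity
\[
|V_v\cap V_w|=\mu-(n-2)\,p_u(v,w),
\]
and in particular the bound $p_u(v,w)\leq t$.

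The ``moreover'' clause is then nearly automatic. For $a\in M_0(u,v)$ the identity gives $|V_v\cap V_a|=\mu$, and since $|V_v|=|V_a|=\mu$ this forces $V_a=V_v$. For two distinct $a,b\in M_0(u,v)$ the same reasoning yields $V_a=V_b=V_v$, hence $q_u(a,b)=\mu-p_u(a,b)$. If we assume $a\sim b$ in (\ref{pq}) we obtain $(n-2)p_u(a,b)=-(n-2)(n+1)$, impossible for $n\geq 3$; so $a\nsim b$, and the non-adjacent case of (\ref{pq}) then gives $p_u(a,b)=0$. Finally, since in each triangle $K\in V_v$ the members of $\{v\}\cup M_0(u,v)$ must pick pairwise distinct vertices and $|K|=3$, we obtain $m_0(u,v)\leq 2$.

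The heart of the argument, and the step I expect to be the main obstacle, is to pin down the remaining counts. My plan is to show first that $m_i(u,v)=0$ for $1\leq i\leq n-1$ by anchoring to an element $a\in M_0(u,v)$ (which exists by (\ref{con})): for any putative $w\in M_i(u,v)$, partition the triangles in $V_v\cap V_w=V_a\cap V_w$ according to whether the vertex of $K$ adjacent to $w$ equals $v^{(v)}_K$, equals $v^{(a)}_K$, or is the third vertex of $K$; applying the core identity to both reference pairs $(u,v,w)$ and $(u,a,w)$ supplies two linear relations on the three part-sizes whose non-negativity rules out $1\leq i\leq n-1$, and an additional case check handles $i=n+3$ when $n=3$. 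After these vanishings, the system (\ref{sys}) becomes three equations in the four unknowns $m_0,m_n,m_{n+1},m_{n+2}$ with $m_0\in\{1,2\}$. A fourth constraint is needed to force $m_0=2$; I plan to obtain it by a double count of ordered pairs $(a,w)$ with $a\in M_0(u,v)$ and $w\in M_i(u,v)$ for a strategic value of $i$, using the rigidity $V_a=V_v$ to match it against the already-known moments. This should eliminate $m_0=1$ and give precisely the values (\ref{uniq}).
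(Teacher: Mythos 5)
Your setup is sound and is essentially an equivalent repackaging of the paper's machinery: the identity $q_u(v,w)=|V_v\cap V_w|-p_u(v,w)$ is exactly what makes (\ref{pq}) work here, and your derivation of the `Moreover' clause, of $V_a=V_v$ for $a\in M_0(u,v)$, and of $m_0(u,v)\leqslant 2$ matches the paper's Step 4. The problem is the central vanishing step. With a single anchor $a\in M_0(u,v)$ and a vertex $w$ with $a\nsim w$, your two linear relations on $(\alpha,\beta,\gamma)$ with $\alpha=p_u(v,w)$ and $\beta=p_u(a,w)$ read $(n-2)\alpha+(\alpha+\beta+\gamma)=\mu$ and $(n-2)\beta+(\alpha+\beta+\gamma)=\mu$; they give $\alpha=\beta$ and $\gamma=\mu-n\alpha$, and non-negativity yields only the \emph{upper} bound $\alpha\leqslant n+1$. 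Nothing excludes, say, $\alpha=\beta=1$ and $\gamma=n^2$, so the key claim $m_1(u,v)=\cdots=m_{n-1}(u,v)=0$ is not proved. (Using both $a,b\in M_0(u,v)$ would close this, since then $\gamma=p_u(b,w)$ and the three identities force $\alpha=\mu/(n+1)=n$ in the generic case; but that presupposes $m_0(u,v)=2$, which your plan only establishes afterwards.)

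The paper's Step 5 closes exactly this hole by invoking condition (\ref{con}) a second time, for the pair $(u,w)$: it takes $v_2\in M_0(u,v)$ and $w_2\in M_0(u,w)$, observes that in each triangle of $V_v\cap V_w$ the two-element sets of vertices hit by $\{v,v_2\}$ and by $\{w,w_2\}$ must intersect (pigeonhole in a $3$-set), so that $\sum_{i,j}p_u(v_i,w_j)\geqslant t:=|V_v\cap V_w|$, and then sums the four resulting instances of (\ref{pq}) to get $t\leqslant 4\mu/(n+2)$, whence $p_u(v,w)\geqslant\mu/(n+2)>n-1$. You need this two-sided anchoring (or an equivalent lower-bound mechanism). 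Separately, your closing ``fourth constraint'' is too vague: the constraint actually needed is $m_{n+2}(u,v)\leqslant\mu$. Your identities do give the first half of it for free --- every $w\in M_{n+2}(u,v)$ must satisfy $a\thicksim w$, since otherwise $\gamma=\mu-n(n+2)<0$, and then $p_u(a,w)=p_v(a,w)=1$ --- but one still needs the paper's Steps 2 and 3: $\langle N(u,a),N(v,a)\rangle$ is $1$-regular, and $w\mapsto (u',v')$ with $N(u,a,w)=\{u'\}$, $N(v,a,w)=\{v'\}$ injects $M_{n+2}(u,v)$ into its $\mu$ edges. Your flagging of the stray index $i=n+3$ when $n=3$ is a good catch (the anchored identities do kill it, and the paper passes over it silently), but the lower-bound step is a genuine gap that must be repaired before the rest goes through.
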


\begin{proof}{
Fix two non-adjacent vertices   $u,  v\in V(\mathbb{G})$. Since $\mathbb{G}$ satisf{}ies (\ref{con}), there exists a vertex  $a\in M_0(u, v)$.
We f{}irst establish the following steps.

\noindent{\bf{\textsf{Step 1.}}} $\langle M_0(u, v), M_{n+2}(u, v)\rangle$ is complete bipartite.

By contrary, suppose  that $x\in M_0(u, v)$ is not adjacent to  $y\in M_{n+2}(u, v)$.  Since $q_u(v, x)=\mu$,  $p_u(v, y)=2$, and $q_u(v, y)=n+2$, one can easily  deduce  that $q_u(x, y)\geqslant n+2$ and $p_u(x, y)+q_u(x, y)=n+4$. Further,
we have from  (\ref{pq}) that  $(n-1)p_u(x, y)+q_u(x, y)=\mu$. These two   equalities  yield that    $q_u(x, y)=2$,  a contradiction.

\noindent{\bf{\textsf{Step 2.}}} $\langle N(u, a), N(v, a)\rangle$ is $1$-regular.

Consider an  arbitrary vertex  $x\in N(v, a)$.  Since $\langle N[v]\rangle$ is a disjoint union of triangles, $p_u(v, x)=1$ and so
(\ref{pq}) implies that   $q_u(v, x)=n+3$. This shows that $p_u(a, x)+q_u(a, x)=n+4$. Again,   (\ref{pq}) yields that   $p_u(a, x)=1$, as required.

\noindent{\bf{\textsf{Step 3.}}} $m_{n+2}(u, v)\leqslant\mu$.

Consider an  arbitrary vertex   $x\in M_{n+2}(u, v)$. Since $q_u(v, a)=\mu$,   $p_u(v, x)=n+2$, and $q_u(v, x)=2$, we conclude  that  $p_u(a, x)+q_u(a, x)=n+4$.
By Step 1 and  (\ref{pq}), we f{}ind that $p_u(a, x)=1$ and similarly, $p_v(a, x)=1$. Let $N(u, a, x)=\{u'\}$ and $N(v, a, x)=\{v'\}$. Since $\mathbb{G}$ is diamond-free,  $u'\thicksim v'$. It follows from Step 2 that  $m_{n+2}(u, v)\leqslant\mu$, as desired.

\noindent{\bf{\textsf{Step 4.}}} $m_0(u, v)\leqslant2$ and the `Moreover' statement   holds.

For every two vertices $x, y\in M_0(u, v)$, we have   $p_u(x, y)+q_u(x, y)=\mu$ and   by (\ref{pq}),  $(n-1)p_u(x, y)+q_u(x, y)=\epsilon(n+1)$, where  $\epsilon\in\{2, n\}$. This yields that $p_u(x, y)=0$ and $x\nsim y$. Since $\langle N[u]\rangle$ is a disjoint union of triangles, we   must have $m_0(u, v)\leqslant2$. If $M_0(u, v)=\{a, b\}$, then (\ref{pq}) forces that   $q_u(v,a)=q_u(v, b)=\mu$. This  shows clearly  that the `Moreover' statement  is valid.

\noindent{\bf{\textsf{Step 5.}}}   Let $\{u, v_1, w_1\}$ be an  independent set with  $p_u(v_1, w_1)\neq0$.  Then $p_u(v_1, w_1)\geqslant n$.

Let $v_2 \in M_0(u,v_1)$ and $w_2 \in M_0(u,w_1)$. Since $p_u(v_1,w_1)\neq0$, Step 4 shows that  $v_2\neq w_2$. Let $t$ denote  the number of elements in $\mathnormal{\Phi}(u)$ meeting both $N(v_1)$ and $N(w_1)$. Using Step 4 and  (\ref{pq}), we have
\begin{eqnarray}\label{4}
(n-1)p_u(v_i, w_j)+\big(t-p_u(v_i, w_j)\big)=\epsilon_{ij}(n+1), \quad \text{ for } i, j\in\{1, 2\},
\end{eqnarray}
where  $\epsilon_{ij}=2$,  if $v_i\thicksim w_j$ and   $\epsilon_{ij}=n$, otherwise.
Since $n\geqslant3$ and $p_u(v_1, w_1)+p_u(v_1, w_2)+p_u(v_2, w_1)+p_u(v_2, w_2)=t$,
summing up the    four  formulae given in (\ref{4}), we obtain  that $t\leqslant4\mu/(n+2)$.  The    equality  (\ref{4}) for $i=j=1$ yields that  $p_u(v_1, w_1)\geqslant\mu/(n+2)>n-1$, as we wanted to prove.

We are now prepared to solve the system (\ref{sys}) for $\mathbb{G}$.
Obviously, Step 5 means that $m_1(u, v)=\cdots=m_{n-1}(u, v)=0$.
Solving the system (\ref{sys}) in terms of $m_n(u, v),$ $m_{n+1}(u, v),$  $m_{n+2}(u, v)$, we obtain that
\begin{align}
&m_{n}(u, v)=(n+1)(n+2)(n^2-n+1)-{n+2\choose 2}m_0(u, v); \label{tm0}\\
&m_{n+1}(u, v)=2n(n+2)(n-3)+n(n+2)m_0(u, v); \label{tm1}\\
&m_{n+2}(u, v)=2n(n+1)-{n+1\choose 2}m_0(u, v). \label{tm2}
\end{align}
\vspace{-3.9cm}
\begin{eqnarray*}
\left\{\begin{array}{l}
${}$\hspace{10cm}${}$
\vspace{2.2cm}
${}$\end{array}\right.
\end{eqnarray*}
From (\ref{tm2}) and  using Steps 3 and  4,  we deduce  that $m_0(u, v)=2$ and $m_{n+2}(u, v)=n(n+1)$. Now, the solution  (\ref{uniq}) is clearly obtained  from  (\ref{tm0}) and (\ref{tm1}).
}\end{proof}

Consider a vertex  $u\in V(\mathbb{G})$. Obviously,   Lemma \ref{sol}   shows  that  $\overline{N}(u)$ has a  partition  $\mathnormal{\Psi}(u)$  into   independent sets of size $3$   such that  $p_u(x, y)=0$, for every two distinct vertices $x$ and $y$ belonging to an element of   $\mathnormal{\Psi}(u)$. Notice  that for every subsets   $\phi\in\mathnormal{\Phi}(u)$ and $\psi\in\mathnormal{\Psi}(u)$,    $\langle\phi, \psi\rangle$ is either edgeless or  $1$-regular. In the latter case,  we say that $\phi$ and    $\psi$ are {\sl matched} together.

\begin{lem}\label{cicj}
Let $u\in V(\mathbb{G})$ and let $\psi, \psi'$ be two distinct elements of $\mathnormal{\Psi}(u)$. Then   $\langle\psi, \psi'\rangle$ is $r$-regular with $r\in\{0, 1, 2\}$. Moreover, for every two  vertices $v\in\psi$ and  $w\in\psi'$,
\begin{eqnarray*}
p_u(v, w)=\left\{\begin{array}{ll}
   \max\{0, r-1\}, & \text{if $v\thicksim w$}; \\
    n+r, & \text{otherwise}.
\end{array}\right.
\end{eqnarray*}
\end{lem}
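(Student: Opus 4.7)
The plan is to use the triangles $\phi \in \mathnormal{\Phi}(u)$ matched with both $\psi$ and $\psi'$ as a combinatorial bridge between $p_u$, $q_u$, and $\langle\psi,\psi'\rangle$. Let $T$ be this set of triangles and set $m = |T|$. Every $\phi \in T$ carries perfect matchings to both $\psi$ and $\psi'$ (by the $1$-regularity of $\langle\phi,\psi\rangle$ and $\langle\phi,\psi'\rangle$ recalled just before the lemma), and hence induces a bijection $\sigma_\phi\colon\psi\to\psi'$ in which $\sigma_\phi(v)=w$ exactly when the unique vertex of $\phi$ adjacent to $v$ is also the unique vertex of $\phi$ adjacent to $w$. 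Since $\langle N(u)\rangle$ is a disjoint union of triangles, a direct bijective count yields, for every $v\in\psi$ and $w\in\psi'$,
\[
p_u(v,w)=|\{\phi\in T:\sigma_\phi(v)=w\}| \quad\text{and}\quad q_u(v,w)=|\{\phi\in T:\sigma_\phi(v)\neq w\}|,
\]
and in particular $p_u(v,w)+q_u(v,w)=m$.

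Substituting $q_u(v,w)=m-p_u(v,w)$ into (\ref{pq}) gives the identity
\[
(n-2)\,p_u(v,w)+m=\epsilon(n+1),\qquad \epsilon=2\text{ if }v\thicksim w\text{ and }\epsilon=n\text{ otherwise},
\]
so $p_u(v,w)$ is determined by the adjacency of $v,w$ together with $m$. For each $v\in\psi$ the bijections $\sigma_\phi$ also give $\sum_{w\in\psi'}p_u(v,w)=m$; summing the displayed identity over $w\in\psi'$ collapses to $m=3n-d(v)(n-2)$, where $d(v)$ denotes the degree of $v$ in $\langle\psi,\psi'\rangle$. Hence $d(v)=(3n-m)/(n-2)$ is independent of $v$, so $\langle\psi,\psi'\rangle$ is $r$-regular with $r=(3n-m)/(n-2)\in\{0,1,2,3\}$. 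Reinserting $m=3n-r(n-2)$ into the identity produces $p_u(v,w)=r-1$ for $v\thicksim w$ (which matches $\max\{0,r-1\}$ since the case $r=0$ is vacuous) and $p_u(v,w)=n+r$ for $v\nsim w$.

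The main obstacle is excluding $r=3$. If $r=3$, the derived formula forces $p_u(v,w)=2$ for each of the nine adjacent pairs; since $\lambda=2$, the two common neighbors $x,y$ of any such pair $v\thicksim w$ then lie entirely in $N(u)$. The induced subgraph on $\{v,w,x,y\}$ already carries the five edges $vw,vx,vy,wx,wy$, so diamond-freeness of $\mathbb{G}$ forces $x\thicksim y$. Because $x,y\in N(u)$ and $\langle N(u)\rangle$ is a disjoint union of triangles, $x$ and $y$ belong to a common triangle $\phi\in\mathnormal{\Phi}(u)$. But then $\{u,x,y,v\}$ carries the edges $ux,uy,xy,vx,vy$ with $u\nsim v$, a diamond, contradicting diamond-freeness. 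Therefore $r\in\{0,1,2\}$, completing the proof.
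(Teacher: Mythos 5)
Your proof is correct and follows essentially the same route as the paper: both substitute $q_u(v,w)=m-p_u(v,w)$ (the paper's $t$ equals your $m$, justified the same way via the matched triangles of $\mathnormal{\Phi}(u)$) into (\ref{pq}) and sum over $\psi'$ to obtain regularity and the stated formula for $p_u(v,w)$. The only real divergence is in excluding $r=3$: the paper appeals to $m_2(u,v)=0$ from Lemma \ref{sol} (which is slightly delicate, since $M_2(u,v)$ by definition contains only non-neighbours of $v$ while the $r=3$ case produces $p_u(v,w)=2$ for an \emph{adjacent} pair), whereas your direct diamond-freeness argument is self-contained and, if anything, cleaner.
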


\begin{proof}{
Let $v\in\psi$, $\psi'=\{w_1, w_2, w_3\}$, and $t=p_u(v, w_1)+p_u(v, w_2)+p_u(v, w_3)$. By  Lemma \ref{sol},  $t$ is independent of the choice of $v$ in $\psi$ and  $q_u(v, w_i)=t-p_u(v, w_i)$,  for $i=1, 2, 3$. Applying (\ref{pq}), we f{}ind for each  $i$ that  $(n-2)p_u(v, w_i)=\epsilon_i(n+1)-t$, where $\epsilon_i=2$,  if $v\thicksim w_i$ and   $\epsilon_i=n$, otherwise. Summing up these three formulae, we obtain that $\epsilon_1+\epsilon_2+\epsilon_3=t$. It follows from  $n\geqslant3$ that the degrees of the   elements  in $\psi$ as some  vertices  of  $\langle\psi, \psi'\rangle$ are the same. Clearly, a similar property holds  for the  elements of $\psi'$. This shows  that $\langle\psi, \psi'\rangle$  is $r$-regular, for some $r$.  By Lemma \ref{sol}, $m_2(u, v)=0$ and so $r\in \{0, 1, 2\}$. The rest of the proof is straightforward.
}\end{proof}

\begin{lem}\label{cij2}
Let $u\in V(\mathbb{G})$ and let  $\psi=\{v_1, v_2, v_3\}$,   $\psi'=\{w_1, w_2, w_3\}$ be two distinct elements of $\mathnormal{\Psi}(u)$ in which    $\langle\psi, \psi'\rangle$ is  $2$-regular and     $v_i\nsim w_i$,  for $i=1, 2, 3$.  Then
for any element $\{a_1, a_2, a_3\}\in\mathnormal{\Phi}(u)$ matched to  both  $\psi$ and $\psi'$, there is an permutation $\pi\in\langle(1 \, 2 \, 3)\rangle$ such that   $a_i\thicksim v_i$  and $a_i\thicksim w_{\pi(i)}$,  for $i=1, 2, 3$.
\end{lem}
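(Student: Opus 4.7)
The plan is to fix a labeling of $\{a_1,a_2,a_3\}$ with $a_i\sim v_i$, which is possible because the triangle is matched to $\psi$; to define the permutation $\sigma\in S_3$ via $a_i\sim w_{\sigma(i)}$; and to prove $\sigma\in\langle(1\,2\,3)\rangle$, or equivalently that $\sigma$ is not a transposition.

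My first step is a global double count. Since $\langle\psi,\psi'\rangle$ is $2$-regular with non-edges $v_iw_i$, Lemma~\ref{cicj} yields $p_u(v_i,w_i)=n+2$ and $p_u(v_i,w_j)=1$ for $i\ne j$. Every triangle $T\in\mathnormal{\Phi}(u)$ matched to both $\psi$ and $\psi'$ carries its own matching permutation $\sigma_T$, and $p_u(v_i,w_j)$ equals the number of such triangles with $\sigma_T(i)=j$. Letting $n_\sigma$ denote the number of triangles of type $\sigma\in S_3$, the nine relations $\sum_{\sigma(i)=j}n_\sigma=p_u(v_i,w_j)$ force, by symmetry, $n_{(1\,2)}=n_{(1\,3)}=n_{(2\,3)}=:b$, $n_{\mathrm{id}}=n+2-b$, and $n_{(1\,2\,3)}=n_{(1\,3\,2)}=1-b$, so that non-negativity collapses the problem to $b\in\{0,1\}$. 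The case $b=0$ already puts every matching permutation in $\langle(1\,2\,3)\rangle$, which would prove the lemma.

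The remaining work, and the main obstacle, is to rule out $b=1$. Assume for contradiction $\sigma_T=(1\,2)$ for some triangle $T=\{a_1,a_2,a_3\}$; then $a_3$ is a common neighbor of the non-adjacent pair $v_3,w_3$, while $a_1$ is the unique vertex in $N(u,v_1,w_2)$. Because $v_1\sim w_2$, $\lambda=2$, and $\mathbb{G}$ is diamond-free, $N(v_1,w_2)=\{a_1,y\}$ with $\{v_1,w_2,a_1,y\}$ inducing a $K_4$ and $y\in\overline{N}(u)$. Let $\tilde\psi\in\mathnormal{\Psi}(u)$ be the class of $y$; since $a_1\in N(u,v_1,y)$, Lemma~\ref{cicj} forces $\langle\psi,\tilde\psi\rangle$ to be $2$-regular, and symmetrically $\langle\psi',\tilde\psi\rangle$ is $2$-regular. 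The target contradiction should then follow by combining this rigidity with the \textit{Moreover} clause of Lemma~\ref{sol} applied to the non-adjacent pair $(v_3,w_3)$: its $M_0(v_3,w_3)$-vertices $c,d$ constrain every $\mathnormal{\Phi}(v_3)$-triangle meeting $N(w_3)$ to meet both $N(c)$ and $N(d)$, and the three transposition triangles guaranteed by $b=1$ should overdetermine these incidences. Since the linear data alone admit both distributions, the crux of the argument must be this structural rigidity rather than pure numerics.
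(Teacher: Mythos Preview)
Your double count is correct and cleanly reduces the problem to excluding a single transposition, but this overhead is not needed: the paper simply assumes one matched triangle carries a transposition and derives a contradiction directly, so that excluding $b=1$ \emph{is} the whole lemma.

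The genuine gap is in your last paragraph. You recognise that the contradiction must be structural, but the route you sketch --- invoking $M_0(v_3,w_3)$ and hoping the three transposition triangles ``overdetermine'' incidences --- is not carried out, and it is not clear how to make it work. In fact you have already produced the decisive vertex $y$ and then abandoned it. The paper's argument, transcribed into your labeling ($a_1\sim v_1,w_2$; $a_2\sim v_2,w_1$; $a_3\sim v_3,w_3$), runs as follows. The triangle $\{a_1,w_2,y\}$ lies in $\mathnormal{\Phi}(v_1)$, while $\{u,v_2,v_3\}\in\mathnormal{\Psi}(v_1)$ (this symmetry of $\mathnormal{\Psi}$ follows from Lemma~\ref{sol}); since $a_1\sim u$ and $w_2\sim v_3$, the matching forces $y\sim v_2$. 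Symmetrically, $\{a_1,v_1,y\}\in\mathnormal{\Phi}(w_2)$ is matched to $\{u,w_1,w_3\}\in\mathnormal{\Psi}(w_2)$, and from $a_1\sim u$, $v_1\sim w_3$ one gets $y\sim w_1$. Now $\{a_2,v_2,w_1,y\}$ spans a diamond unless $y\sim a_2$; but then $\{u,a_3,y\}\subseteq N(a_1,a_2)$ contradicts $\lambda=2$.

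So the fix is short and uses exactly the object you already built: chase the adjacencies of $y$ through the $\mathnormal{\Phi}/\mathnormal{\Psi}$ structure at $v_1$ and at $w_2$, rather than passing to $M_0(v_3,w_3)$ or to a third class $\tilde\psi$.
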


\begin{proof}{
By the contrary and with no loss of generality, suppose that there is an element  $\{a_1, a_2, a_3\}\in\mathnormal{\Phi}(u)$ with  $a_1\in N(v_1, w_1)$, $a_2\in N(v_2, w_3)$, and  $a_3\in N(v_3, w_2)$. Since the neighborhood of each vertex of $\mathbb{G}$ is a disjoint union of triangles,  there is a vertex $x\in N(a_2, v_2, w_3)$. Since $\{a_2, w_3, x\}\in\mathnormal{\Phi}(v_2)$, $\{u, v_1, v_3\}\in\mathnormal{\Psi}(v_2)$, $a_2\thicksim u$,    and $w_3\thicksim v_1$, we deduce  that $x\thicksim v_3$.
Also, since $\{a_2, v_2, x\}\in\mathnormal{\Phi}(w_3)$, $\{u, w_1, w_2\}\in\mathnormal{\Psi}(w_3)$, $a_2\thicksim u$,    and $v_2\thicksim w_1$, we conclude   that $x\thicksim w_2$. Thus  $\langle\{a_3, v_3, w_2, x\}\rangle$  contains a diamond as a subgraph, which forces that  $x\thicksim a_3$. However, this is impossible, since  $\{u, a_1, x\}\subseteq N(a_2, a_3)$.
}\end{proof}

\begin{lem}\label{2cliq}
Let $u\in V(\mathbb{G})$ and let  $\phi, \phi'$ be two distinct elements of $\mathnormal{\Phi}(u)$. Then there is a suitable labeling $\phi=\{a_1, a_2, a_3\}$ and $\phi'=\{b_1, b_2, b_3\}$ such that for any element  $\{v_1, v_2, v_3\}\in\mathnormal{\Psi}(u)$ matched to both  $\phi$ and $\phi'$, the relations  $a_i\thicksim v_i$  and $b_i\thicksim v_{\pi(i)}$ hold, for any  $i\in\{1, 2, 3\}$ and some permutation  $\pi\in\langle(1 \, 2 \, 3)\rangle$.
\end{lem}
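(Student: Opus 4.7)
The plan is to fix a reference element $\psi^*=\{v^*_1, v^*_2, v^*_3\}\in\mathnormal{\Psi}(u)$ matched to both $\phi$ and $\phi'$ (if none exists, the statement is vacuous), and label $\phi=\{a_1, a_2, a_3\}$, $\phi'=\{b_1, b_2, b_3\}$ by the matching rule $a_i\thicksim v^*_i$ and $b_i\thicksim v^*_i$. For any other matched $\psi=\{v_1, v_2, v_3\}$, labeled by $a_i\thicksim v_i$, let $\sigma$ be the permutation with $b_i\thicksim v_{\sigma(i)}$; the goal is to prove $\sigma\in\langle(1\,2\,3)\rangle$. I first note that $\phi$ and $\phi'$ are disjoint and have no edges between them, since they are distinct components of the disjoint-triangle graph $\langle N(u)\rangle$.

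Suppose for contradiction that $\sigma$ is a transposition; by symmetry take $\sigma=(1\,2)$, so $b_1\thicksim v_2$, $b_2\thicksim v_1$, $b_3\thicksim v_3$. The first step is to show $v^*_3\nsim v_3$: we have $\{a_3, b_3\}\subseteq N(v^*_3, v_3)$ with $a_3\nsim b_3$, so if $v^*_3\thicksim v_3$ then $|N(v^*_3, v_3)|=\lambda=2$ would force the two common neighbors to be adjacent (they would form a triangle in $\mathnormal{\Phi}(v_3)$ with edge $v^*_3v_3$), contradicting $a_3\nsim b_3$.

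By Lemma \ref{cicj}, $\langle\psi^*, \psi\rangle$ is $r$-regular for some $r\in\{0, 1, 2\}$. When $r=2$, the non-adjacencies between $\psi^*$ and $\psi$ form a perfect matching $v^*_i\nsim v_{\rho(i)}$ with $\rho(3)=3$. Relabeling $\psi$ via $\tilde v_i=v_{\rho(i)}$ yields $v^*_i\nsim \tilde v_i$, and in this labeling $a_i\thicksim\tilde v_{\rho^{-1}(i)}$ while $b_i\thicksim\tilde v_{\rho^{-1}(\sigma(i))}$. Applying Lemma \ref{cij2} to $\phi$ and to $\phi'$ (both matched to $\psi^*$ and $\tilde\psi$) forces both $\rho^{-1}$ and $\rho^{-1}\sigma$ to lie in the cyclic subgroup $\langle(1\,2\,3)\rangle$; since this is a subgroup, the product $\sigma=(\rho^{-1})^{-1}(\rho^{-1}\sigma)$ is also cyclic, contradicting $\sigma=(1\,2)$.

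The main obstacle is the case $r\in\{0, 1\}$, where Lemma \ref{cij2} does not apply directly. My strategy there is to find an auxiliary $\psi^{**}\in\mathnormal{\Psi}(u)$ matched to both $\phi, \phi'$ such that both $\langle\psi^*, \psi^{**}\rangle$ and $\langle\psi, \psi^{**}\rangle$ are $2$-regular; invoking the $r=2$ argument twice would then force $\sigma_{\psi^{**}}$ to agree in parity with both $\sigma_{\psi^*}=\mathrm{id}$ (even) and $\sigma_\psi=(1\,2)$ (odd), which is impossible. Existence of such $\psi^{**}$ should follow from counting: a double count of triples $(a, b, v)\in\phi\times\phi'\times\overline{N}(u)$ with $v\thicksim a, b$ yields $9(\mu-1)$ triples, hence $3(n^2+n-1)$ matched $\psi$'s, and the $r$-regularity statistics forced by Lemma \ref{cicj} together with (\ref{pq}) should guarantee enough $2$-regular pairs. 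If this counting argument is not sharp enough, the alternative is a direct diamond-producing argument in the spirit of Lemma \ref{cij2}, executed at a vertex $v\in\psi^*\cup\psi$ using the induced $\mathnormal{\Phi}(v)$-$\mathnormal{\Psi}(v)$ matchings and triangles through pairs like $(a_i, v^*_i)$, to produce an induced diamond and contradict $|N(a_i, a_j)|=\lambda=2$.
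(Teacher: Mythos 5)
Your reduction of the lemma to a parity statement (all permutations $\sigma_\psi$ relating the $\phi$-labels to the $\phi'$-labels lie in one coset of $\langle(1\,2\,3)\rangle$) is correct, and your treatment of the case where $\langle\psi^*,\psi\rangle$ is $2$-regular is sound: after relabelling along the non-adjacency matching, applying Lemma \ref{cij2} once to $\phi$ and once to $\phi'$ does force $\sigma_\psi=\pi_a^{-1}\pi_b\in\langle(1\,2\,3)\rangle$. The genuine gap is exactly where you flag it: the case $r\in\{0,1\}$ is not proved. Neither of your two fallback strategies is executed, and the counting one looks unlikely to close. A matched element $\psi^*$ has exactly $m_{n+2}(u,v)=n(n+1)$ two-regular partners in $\mathnormal{\Psi}(u)$ (by Lemmas \ref{sol} and \ref{cicj}), and there are only $3(\mu-1)=3(n^2+n-1)$ elements matched to both $\phi$ and $\phi'$, while $|\mathnormal{\Psi}(u)|$ grows like $n^4/3$; so there is no crude counting reason why a given pair $(\psi^*,\psi)$ should admit a common $2$-regular partner that is also matched to both triangles. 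Without that, the transitivity-of-parity argument has nothing to stand on, and the ``direct diamond-producing argument'' is only named, not given.

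For comparison, the paper does not argue pair-by-pair at all. It partitions the matched elements into the two parity classes $\mathcal{S}=\EuScript{R}_{123}\cup\EuScript{R}_{231}\cup\EuScript{R}_{312}$ and $\mathcal{T}=\EuScript{R}_{132}\cup\EuScript{R}_{321}\cup\EuScript{R}_{213}$ and shows one of them is empty by a spectral argument: using Lemmas \ref{cicj} and \ref{cij2} it shows $\langle\mathcal{S},\mathcal{T}\rangle$ is edgeless, and using the dichotomy from (\ref{uniq}) (each $p_x(a_i,b_j)$ is $0$ or at least $n$) it shows every vertex of $\langle\mathcal{S}\rangle$ and of $\langle\mathcal{T}\rangle$ has degree at least $2n$ inside its own class; if both classes were nonempty, the induced subgraph on $\mathcal{S}\cup\mathcal{T}$ would have second largest eigenvalue at least $2n$, contradicting interlacing since the second eigenvalue of $\mathbb{G}$ is $n$. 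This global argument is precisely what lets the paper sidestep the $r\in\{0,1\}$ pairs that block your local approach; if you want to salvage your route, you would need either that interlacing idea or a genuinely new combinatorial argument for those cases.
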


\begin{proof}{
Let $\EuScript{R}_{ij\ell}=\{\{v_1, v_2, v_3\}\in\mathnormal{\Psi}(u) \, | \, v_1\in N(a_1, b_i),  v_2\in N(a_2, b_j), v_3\in N(a_3, b_\ell)\}$,  for every $i, j, \ell$ with  $\{i, j, \ell\}=\{1, 2, 3\}.$ Since each pair $a_i\nsim b_j$  has $\mu-1$ common neighbors except  $u$, it is easily seen that  $|\EuScript{R}_{123}|=|\EuScript{R}_{231}|=|\EuScript{R}_{312}|$ and $|\EuScript{R}_{132}|=|\EuScript{R}_{321}|=|\EuScript{R}_{213}|=\mu-1-|\EuScript{R}_{123}|$. Let $\mathcal{S}=\EuScript{R}_{123}\cup\EuScript{R}_{231}\cup\EuScript{R}_{312}$ and $\mathcal{T}=\EuScript{R}_{132}\cup\EuScript{R}_{321}\cup\EuScript{R}_{213}$.  The  assertion of the lemma is  equivalent to that either    $\mathcal{S}=\varnothing$ or $\mathcal{T}=\varnothing$. By  contrary, suppose  that both $\mathcal{S}$ and  $\mathcal{T}$ are not empty.
We show  that the degree of  each vertex of $\langle\mathcal{S}\rangle$ is at  least $2n$. With no loss of generality, consider  $x\in\mathcal{S}\cap N(a_1, b_1)$. It is easily checked by Lemmas \ref{cicj} and \ref{cij2} that $\langle\mathcal{S}, \mathcal{T}\rangle$ is edgeless. Since $b_2\thicksim b_3$, at least one  set in  each of  pairs   $\{N(x, a_2, b_2), N(x, a_2, b_3)\}$ and $\{N(x, a_3, b_2), N(x, a_3, b_3)\}$ is  not empty.
On the other hand, it follows from  (\ref{uniq}) that  either  $p_x(a_i, b_j)=0$ or $p_x(a_i, b_j)\geqslant n$, for every indices  $i, j\in\{2, 3\}$.
This clearly means that the degree  of $x$ as a vertex of $\langle\mathcal{S}\rangle$ is at  least $2n$, as desired. Obviously, the similar property holds for $\langle\mathcal{T}\rangle$. So,  the second largest eigenvalue of  $\langle\mathcal{S}, \mathcal{T}\rangle=\langle\mathcal{S}\rangle\cup\langle\mathcal{T}\rangle$ would be at least $2n$. This  is a contradiction by the  interlacing theorem,  since the second largest eigenvalue of $\mathbb{G}$ is $r=n$.
}\end{proof}

We now proceed to def{}ine  a permutation $\sigma_u$ on $V(\mathbb{G})$ of order $3$ and then demonstrate  that $\sigma_u$ is in fact an automorphism of $\mathbb{G}$. Put  $\sigma_u(u)=u$. Fix an element $\zeta=\{z_1, z_2, z_3\}$ of  $\mathnormal{\Phi}(u)$ and def{}ine  $\sigma_u(z_1)=z_2$, $\sigma_u(z_2)=z_3$,  and $\sigma_u(z_3)=z_1$. We repeatedly do the following process  until  $\sigma_u$ is def{}ined on the whole  $V(\mathbb{G})$:
\vspace{-2mm}
\begin{quotation}
\noindent Assume  that  $\{a_1, a_2, a_3\}\in\mathnormal{\Phi}(u)$ and   $\{v_1, v_2, v_3\}\in\mathnormal{\Psi}(u)$ form a  matched pair with  $a_i\thicksim v_i$, for $i=1, 2, 3$.  If  $\sigma_u$ is already def{}ined on only  one of the  two  triples, then we  def{}ine  $\sigma_u$ on  the other one such that $\sigma_u$ induces the same permutation on indices of elements of  the  two triples.
\end{quotation}
\vspace{-2mm}
Note that we may f{}irst  def{}ine $\sigma_u$ on   the all  elements of $\mathnormal{\Psi}(u)$  matched  with  $\zeta$   and then we can proceed to def{}ine $\sigma_u$  on each element of $\mathnormal{\Phi}(u)$,  since $\mu>1$. Finally, $\sigma_u$ is def{}ined on each element of  $\mathnormal{\Psi}(u)$. We show that $\sigma_u$ is a well def{}ined permutation. For this, it suf{}f{}ices to demonstrate   that
\vspace{-1mm}
\begin{itemize}
\item[$\mathsf{(i)}$] if $\sigma_u$ is def{}ined on two elements  $\psi=\{v_1, v_2, v_3\}$ and  $\psi'=\{w_1, w_2, w_3\}$ in $\mathnormal{\Psi}(u)$ and $\phi=\{a_1, a_2, a_3\}\in\mathnormal{\Phi}(u)$ is matched to  $\psi$ and $\psi'$, then the def{}initions of $\sigma_u$ forced  by $\psi$ and $\psi'$ on $\phi$ are  the same;
\item[$\mathsf{(ii)}$]  if $\sigma_u$ is def{}ined on two elements  $\phi=\{a_1, a_2, a_3\}$ and   $\phi'=\{b_1, b_2, b_3\}$ of  $\mathnormal{\Phi}(u)$ and $\psi=\{v_1, v_2, v_3\}\in\mathnormal{\Psi}(u)$ is matched to   $\phi$ and $\phi'$, then the def{}initions of $\sigma_u$ forced by   $\phi$ and $\phi'$ on $\psi$ are  the same.
\end{itemize}
\vspace{-1mm}
The  assertions $\mathsf{(i)}$  and $\mathsf{(ii)}$ are direct consequences of Lemmas \ref{cij2} and \ref{2cliq}, respectively. For $\mathsf{(i)}$, note that we may assume that  $\zeta$ is matched to $\psi$ and $\psi'$. For $\mathsf{(ii)}$, note that $z_1\in M_i(a_1, b_1)$, for some $i\geqslant1$, and so there is
a vertex $w\in N(z_1, a_1, b_1)$. This shows that there is an element in $\mathnormal{\Psi}(u)$ containing $w$ which  matches to $\zeta$, $\psi$, and $\psi'$.

The above discussion  implies that  $\sigma_u$ is  well def{}ined. Also, from the def{}inition of $\sigma_u$, we easily see that   the subgraphs $\langle N[u]\rangle$ and $\langle N[u], \overline{N}(u)\rangle$ are  f{}ixed by $\sigma_u$. Therefore, applying  (\ref{star}), $\langle\overline{N}(u)\rangle$ is f{}ixed by $\sigma_u$  and hence   $\sigma_u$ is   an automorphism of $\mathbb{G}$.

As we saw in the above, for  each vertex $u\in V(\mathbb{G})$, we can associate   to $u$  two automorphisms of $\mathbb{G}$ of  order $3$,  that  are the inverse of each other. Fix a vertex  $\mathpzc{z}\in V(\mathbb{G})$ and also   f{}ix $\sigma_\mathpzc{z}$ to be one  of the two  automorphisms associated   to  $\mathpzc{z}$. Now, for  any arbitrary  vertex $u\in V(\mathbb{G})$, let  $\sigma_u$ be that automorphism associated   to $u$
satisfying   $\sigma_u(\mathpzc{z})=\sigma^{-1}_\mathpzc{z}(u)$.

\begin{lem}\label{invers}
For every two   vertices  $u, v\in V(\mathbb{G})$, $\sigma_u(v)=\sigma^{-1}_v(u)$.
\end{lem}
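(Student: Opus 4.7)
The case $v=\mathpzc{z}$ of the lemma is precisely the defining property of $\sigma_u$, so the plan is to promote this base case to arbitrary $v$ via two conjugation identities.

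First, I would establish
\[
(\star)\qquad \sigma_\mathpzc{z}\,\sigma_u\,\sigma_\mathpzc{z}^{-1}=\sigma_{\sigma_\mathpzc{z}(u)}\qquad (u\in V(\mathbb{G})).
\]
The left-hand side is an order-$3$ automorphism of $\mathbb{G}$ fixing $\sigma_\mathpzc{z}(u)$, so it equals either $\sigma_{\sigma_\mathpzc{z}(u)}$ or its inverse. Evaluating at $\mathpzc{z}$ distinguishes them: the left gives $\sigma_\mathpzc{z}(\sigma_u(\mathpzc{z}))=\sigma_\mathpzc{z}(\sigma_\mathpzc{z}^{-1}(u))=u$, while the defining property applied to the vertex $\sigma_\mathpzc{z}(u)$ gives $\sigma_{\sigma_\mathpzc{z}(u)}(\mathpzc{z})=\sigma_\mathpzc{z}^{-1}(\sigma_\mathpzc{z}(u))=u$. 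Two order-$3$ automorphisms fixing the same vertex and agreeing on one other point must coincide.

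Next I would describe the $\sigma_u$-orbit of $\mathpzc{z}$ explicitly. This orbit is the unique block of $\mathnormal{\Phi}(u)\cup\mathnormal{\Psi}(u)$ containing $\mathpzc{z}$, since $\sigma_u$ stabilizes each block setwise and acts on it as a $3$-cycle. Whether $u\sim\mathpzc{z}$ or $u\nsim\mathpzc{z}$, the two non-$\mathpzc{z}$ members of this block coincide with the two non-$u$ members of the block in $\mathnormal{\Phi}(\mathpzc{z})\cup\mathnormal{\Psi}(\mathpzc{z})$ containing $u$ (the two common neighbors of $u,\mathpzc{z}$ in the first case, the pair $M_0(u,\mathpzc{z})=M_0(\mathpzc{z},u)$ in the second), namely the set $\{\sigma_\mathpzc{z}(u),\sigma_\mathpzc{z}^2(u)\}$. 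Hence the block is $\{\mathpzc{z},\sigma_\mathpzc{z}(u),\sigma_\mathpzc{z}^2(u)\}$. Since $\sigma_u(\mathpzc{z})=\sigma_\mathpzc{z}^{-1}(u)=\sigma_\mathpzc{z}^2(u)$, the $3$-cycle of $\sigma_u$ on this block must be $\mathpzc{z}\to\sigma_\mathpzc{z}^2(u)\to\sigma_\mathpzc{z}(u)\to\mathpzc{z}$. In particular $\sigma_u^{-1}(\mathpzc{z})=\sigma_\mathpzc{z}(u)$, which is exactly the lemma for the pair $(\mathpzc{z},u)$.

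Using both ingredients, I would extend $(\star)$ to
\[
(\star')\qquad \sigma_a\,\sigma_u\,\sigma_a^{-1}=\sigma_{\sigma_a(u)}\qquad(a,u\in V(\mathbb{G})).
\]
Both sides have order $3$ and fix $\sigma_a(u)$, so it again suffices to check agreement at $\mathpzc{z}$. By the previous step, $\sigma_a^{-1}(\mathpzc{z})=\sigma_\mathpzc{z}(a)$, so the left-hand side at $\mathpzc{z}$ becomes $\sigma_a(\sigma_u(\sigma_\mathpzc{z}(a)))$, which must be matched to $\sigma_{\sigma_a(u)}(\mathpzc{z})=\sigma_\mathpzc{z}^{-1}(\sigma_a(u))$. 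Granting $(\star')$, the set of pairs satisfying the lemma is invariant under the diagonal action of every $\sigma_a$ (since both $\sigma_{\sigma_a(u)}\circ\sigma_a$ and $\sigma_a\circ\sigma_u$ coincide on $V(\mathbb{G})$, and likewise for the inverses). Because this set contains the base case $\{(u,\mathpzc{z}):u\in V(\mathbb{G})\}$ and the image of $(u,\mathpzc{z})$ under diagonal $\sigma_a$ is $(\sigma_a(u),\sigma_\mathpzc{z}^{-1}(a))$, any pair $(x,y)$ is reached by choosing $a=\sigma_\mathpzc{z}(y)$ and $u=\sigma_a^{-1}(x)$, so the lemma holds universally.

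The hard part is the verification of $(\star')$, i.e., showing $\sigma_a(\sigma_u(\sigma_\mathpzc{z}(a)))=\sigma_\mathpzc{z}^{-1}(\sigma_a(u))$ directly from $(\star)$ and the defining property, without circular appeal to the lemma. I expect a disciplined cascade of $(\star)$-substitutions (applied in turn to $a$, to $u$, and to $\sigma_\mathpzc{z}(u)$), together with the orbit formula $\sigma_w^{-1}(\mathpzc{z})=\sigma_\mathpzc{z}(w)$ from the second step, to reduce the required identity to a combinatorial statement that follows from the matched-pair compatibility encoded in Lemmas~\ref{cicj}, \ref{cij2}, and \ref{2cliq}.
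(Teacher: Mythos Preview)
Your argument establishes $(\star)$ and the orbit formula correctly (modulo the easy remark that a conjugate $\sigma_\mathpzc{z}\sigma_u\sigma_\mathpzc{z}^{-1}$ stabilises each block of $\mathnormal{\Phi}(\sigma_\mathpzc{z}(u))\cup\mathnormal{\Psi}(\sigma_\mathpzc{z}(u))$, so that the well-definedness argument for $\sigma$ forces it to be $\sigma_{\sigma_\mathpzc{z}(u)}^{\pm1}$), and the deduction of the lemma from $(\star')$ is fine. The gap is exactly where you say it is: you never verify $(\star')$. The identity you must check, $\sigma_a(\sigma_u(\sigma_\mathpzc{z}(a)))=\sigma_\mathpzc{z}^{-1}(\sigma_a(u))$, requires evaluating $\sigma_u$ at a general vertex $\sigma_\mathpzc{z}(a)$ and $\sigma_a$ at general vertices $u$ and $\sigma_u(\sigma_\mathpzc{z}(a))$; repeated use of $(\star)$ only lets you trade such evaluations for others of the same type, and I see no way the cascade closes without essentially assuming the lemma. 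Appealing to Lemmas~\ref{cicj}--\ref{2cliq} does not help either: those lemmas constrain how $\sigma_w$ acts on the blocks around $w$, but the points appearing in your identity do not sit in any common block structure in a way that those constraints can address.

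The paper sidesteps this circularity with a different device: it lets the base point float. For each vertex $a$ one defines $^a\tau_b$ to be the order-$3$ automorphism at $b$ satisfying $^a\tau_b(a)={^a\tau_a}^{-1}(b)$, and proves the three-variable identity $^a\tau_b(c)={^a\tau_c}^{-1}(b)$ for all $a,b,c$; specialising $a=\mathpzc{z}$ then gives the lemma. This identity is proven by a seven-case analysis according to the adjacency pattern of $\{a,b,c\}$. The first nontrivial case ($a\sim b$, $a\sim c$, $b\nsim c$) is handled by a local combinatorial argument inside $\mathnormal{\Phi}(a)$, $\mathnormal{\Psi}(b)$, $\mathnormal{\Psi}(c)$; each subsequent case is reduced to earlier ones by replacing the base point $a$ with an auxiliary vertex (a common neighbour, or a carefully chosen point of $\overline{N}(a)$) and using that $^a\tau_w$ and $^{a'}\tau_w$ differ at most by inversion. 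It is precisely this freedom to change the base point that lets the induction go through---the step your plan lacks.
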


\begin{proof}{
In order to prove the lemma, we need to establish a more general result. For any  vertex $u\in V(\mathbb{G})$, f{}ix $^u\tau_u$ to be one of the two automorphisms which  perviously def{}ined at $u$. Also, for each other   vertex $v\in V(\mathbb{G})$, let  $^u\tau_v$ be  that  automorphism def{}ined at $v$ satisfying  $^u\tau_v(u)={^u}\tau^{-1}_u(v)$. Consequently, we  have    $^u\tau^{-1}_v(u)={^u}\tau_u(v)$, for every   vertices $u, v\in V(\mathbb{G})$. We  claim  that  $^a\tau_b(c)={^a}\tau^{-1}_c(b)$,  for every    vertices $a, b, c\in V(\mathbb{G})$.
This   clearly implies  the assertion of the lemma, if we consider $\mathpzc{z}$ instead of $a$.
We will just   prove the claim when  $a, b, c$ are mutually distinct,  since otherwise the claim  follows from the def{}inition.  We consider the following seven  cases.

\noindent{\bf{\textsf{Case 1.}}}    $a\thicksim b$, $a\thicksim c$,  $b\thicksim c$.

In this case,  the claim  is easily  checked  from the def{}inition.

\noindent{\bf{\textsf{Case 2.}}}    $a\thicksim b$, $a\thicksim c$, $b\nsim c$.

Let $\{b, u, u'\}, \{c, v, v'\}\in\mathnormal{\Phi}(a)$ and   $M_0(b, c)=\{w, w'\}$. From $N(b, c, w)=N(b, c, w')=\varnothing$, we f{}ind  that $a\nsim w$ and $a\nsim w'$. Also, from  $a\not\in M_0(w, w')=\{b, c\}$,  one concludes   that $M_0(a, w)$ and $M_0(a, w')$ are disjoint. Let $M_0(a, w)=\{x, x'\}$ and  $M_0(a, w')=\{y, y'\}$. Since  $\{a, u, u'\}\in\mathnormal{\Phi}(b)$, $\{c, w, w'\}\in\mathnormal{\Psi}(b)$, and  $a\thicksim c$, we may, with no loss of generality, assume   that    $u\thicksim w$ and $u'\thicksim w'$. Similarly,  let    $v\thicksim w$ and $v'\thicksim w'$. Without  loss of generality, assume that $^a\tau_a(b)=u$ and $b\thicksim x$. Then  $^a\tau_b(a)={^a}\tau^{-1}_a(b)=u'$, which yields that $^a\tau_b(c)=w'$.  Consider two elements  $\{a, x, x'\}, \{b, c, w'\}\in\mathnormal{\Psi}(w)$. Since $a\in N(b, c)$, Lemma \ref{cicj} yields  that   $\langle\{a, x, x'\}, \{b, c, w'\}\rangle$ is $2$-regular and so we conclude  from   $b\thicksim x$  that     $c\thicksim x'$. Therefore,  $x\thicksim v'$. Since  $^a\tau_a(b)$ has   cycle $(b \, u \,  u')$, it also has cycles $(x \, w \,  x')$ and $(v' \, v \,  c)$. Hence $^a\tau_a(c)=v'$,  which in turn  implies that  $^a\tau^{-1}_c(a)={^a}\tau_a(c)=v'$. So $^a\tau_c$ has cycle $(v' \, a \,  v)$ and so  it also has cycle $(w' \, b \,  w)$. Thus $^a\tau^{-1}_c(b)=w',$ as desired.

\noindent{\bf{\textsf{Case 3.}}}    $a\thicksim b$, $a\nsim c$, $b\thicksim c$.

By the def{}inition,   either  $^b\tau_a={^a}\tau_a$ or $^b\tau_a={^a}\tau^{-1}_a$. We only consider the f{}irst equality. The argument is similar, if the second equality  occurs. We have  $^a\tau_b(a)={^a}\tau^{-1}_a(b)={^b}\tau^{-1}_a(b)={^b}\tau_b(a)$. Since $^a\tau_b$ and $^b\tau_b$ are coincide on $\{a, b\}$, we conclude from the def{}inition  that  $^a\tau_b={^b}\tau_b$.    Also, Case 2 implies that  $^b\tau_c(a)={^b}\tau^{-1}_a(c)={^a}\tau^{-1}_a(c)={^a}\tau_c(a)$, which yields that   $^b\tau_c={^a}\tau_c$.  Therefore,
$^a\tau_b(c)={^b}\tau_b(c)={^b}\tau^{-1}_c(b)={^a}\tau^{-1}_c(b)$,  as required.

\noindent{\bf{\textsf{Case 4.}}}    $N(a, b, c)\neq\varnothing$.

Consider  a vertex $x\in N(a, b, c)$. We assume that    $^x\tau_a={^a}\tau_a$. The argument is similar when $^x\tau_a={^a}\tau^{-1}_a$. Using Cases  1 and 2,  we can write $^x\tau_b(a)={^x}\tau^{-1}_a(b)={^a}\tau^{-1}_a(b)={^a}\tau_b(a)$.  Hence $^x\tau_b={^a}\tau_b$,  and similarly,  $^x\tau_c={^a}\tau_c$.  Therefore, by Cases  1 and 2, we f{}ind that  $^a\tau_b(c)={^x}\tau_b(c)={^x}\tau^{-1}_c(b)={^a}\tau^{-1}_c(b)$, as we wanted to prove.

\noindent{\bf{\textsf{Case 5.}}}    $a\nsim b$, $a\nsim c$,  $b\nsim c$.

If $a\in M_0(b, c)$, then the claim  is easily  checked  from the def{}inition.  So, let $a\not\in M_0(b, c)$, which  means that  there exists  a vertex $x\in N(a, b, c)$. Now we are done by Case  4.

\noindent{\bf{\textsf{Case 6.}}}    $a\nsim b$, $a\nsim c$, $b\thicksim c$.

It   suf{}f{}ices by  Case 4 to assume that $N(a, b, c)=\varnothing$. Let $y, y'\in N(a, b)$ and $z\in N(b, y')$.
Since  $a\nsim b$, we have $y\nsim y'$.
We assume that    $^a\tau_y={^y}\tau_y$.  The argument is similar when $^a\tau_y={^y}\tau^{-1}_y$.
By Case 3, we obtain that $^a\tau_b(y)={^a}\tau^{-1}_y(b)={^y}\tau^{-1}_y(b)={^y}\tau_b(y)$, which yields that $^a\tau_b={^y}\tau_b$.
Since $\langle N(b)\rangle$ and $\langle N(y')\rangle$ are  disjoint unions  of triangles,  $z\not\in N(a)\cup N(c)\cup N(y)$.  It follows from $y'\in N(a, b, z)$ and   Cases 3 and   4  that  $^y\tau_z(b)={^y}\tau^{-1}_b(z)={^a}\tau^{-1}_b(z)={^a}\tau_z(b)$ and thus   $^y\tau_z={^a}\tau_z$.  Moreover,  it follows from $b\in N(c, y, z)$ and    Cases 4 and   5  that  $^y\tau_c(z)={^y}\tau^{-1}_z(c)={^a}\tau^{-1}_z(c)={^a}\tau_c(z)$ and hence     $^y\tau_c={^a}\tau_c$. Since $N(a, b, c)=\varnothing$, we have   $c\nsim y$,  which together Case 3 imply  that $^a\tau_b(c)={^y}\tau_b(c)={^y}\tau_c^{-1}(b)={^a}\tau^{-1}_c(b)$, as desired.

\noindent{\bf{\textsf{Case 7.}}}    $a\thicksim b$, $a\nsim c$, $b\nsim c$.

We assume that    $^c\tau_a={^a}\tau_a$.  The argument for the  case  $^c\tau_a={^a}\tau^{-1}_a$ is similar. We have
$^a\tau_c(a)={^a}\tau^{-1}_a(c)={^c}\tau^{-1}_a(c)={^c}\tau_c(a)$,  which  implies that   $^a\tau_c={^c}\tau_c$. Using Case 6,
$^c\tau_b(a)={^c}\tau^{-1}_a(b)={^a}\tau^{-1}_a(b)={^a}\tau_b(a)$ and so  $^c\tau_b={^a}\tau_b$. Now, we f{}ind that
$^a\tau_b(c)={^c}\tau_b(c)={^c}\tau^{-1}_c(b)={^a}\tau^{-1}_c(b)$, as required.

The proof of the claim is now completed  and  so   the assertion of the lemma follows.
}\end{proof}

In order to continue,  we need   the following result.

\begin{thm}  \label{fixsrg}
{\rm \cite[Theorem\,3.2]{beh}}
If $\pi$ is a non-trivial automorphism of an ${\mathsf{SRG}}(\nu, k, \lambda, \mu)$ with  the second largest  eigenvalue $r$, then the  number of f{}ixed points of $\pi$ is at most $$\frac{\nu}{k-r}\max(\lambda, \mu).$$
\end{thm}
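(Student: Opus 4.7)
The plan is to combine an elementary combinatorial bound on edges across the fixed-point set with a Hoffman-type spectral inequality applied to the characteristic vector of that set. Set $F=\{v\in V(G)\mid \pi(v)=v\}$, $f=|F|$, and $m=\max(\lambda,\mu)$; the goal is to prove $f\leqslant \nu m/(k-r)$.

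First I would establish that for any $v\notin F$ the estimate $|N(v)\cap F|\leqslant m$ holds. Indeed, if $w\in N(v)\cap F$ then $w=\pi(w)$, so $v\thicksim w$ implies $\pi(v)\thicksim w$; hence $N(v)\cap F\subseteq N(v)\cap N(\pi(v))$, and this intersection has size $\lambda$ or $\mu$ according as $v\thicksim\pi(v)$ or not. Summing over $v\notin F$ and comparing with the identity $e(F,V(G)\setminus F)=fk-2e(F)$ obtained from $\sum_{v\in F}\deg(v)=fk$, one arrives at
$$f(k+m)\leqslant \nu m+2e(F).$$

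For the spectral half, decompose the characteristic vector orthogonally as $\chi_F=(f/\nu)\mathbf{1}+w$ with $w\perp \mathbf{1}$. Since the largest eigenvalue of $\EuScript{A}_G$ on $\mathbf{1}^\perp$ is precisely $r$, the Rayleigh--Ritz inequality gives
$$2e(F)=\chi_F^\top \EuScript{A}_G \chi_F=\frac{kf^2}{\nu}+w^\top \EuScript{A}_G w\leqslant \frac{kf^2}{\nu}+\frac{rf(\nu-f)}{\nu}.$$

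Substituting this into the previous inequality and clearing denominators yields the quadratic estimate $(k-r)f^2-\nu(k+m-r)f+\nu^2 m\geqslant 0$. The main point is to notice that its discriminant simplifies to $\nu^2(k-r-m)^2$, so that the left-hand side factors cleanly as $(k-r)(f-\nu)\bigl(f-\nu m/(k-r)\bigr)$. Since $k>r$ for any strongly regular graph, this sign condition forces either $f\leqslant \nu m/(k-r)$ or $f\geqslant \nu$; but $f\leqslant \nu$ always, and $f=\nu$ would mean $\pi$ is the identity, contradicting the hypothesis. Thus the required bound follows. The only real obstacle is spotting the factorization of the quadratic; once it is in hand, the combinatorial count and the spectral estimate are both completely routine.
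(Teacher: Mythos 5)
Your argument is correct. Note that the paper itself gives no proof of this statement: it is imported verbatim as Theorem 3.2 of Behbahani and Lam \cite{beh}, so there is nothing internal to compare against. Your reconstruction is the standard one for such fixed-point bounds: the combinatorial step (for $v\notin F$ one has $N(v)\cap F\subseteq N(v)\cap N(\pi(v))$, hence $e(F,V(G)\setminus F)\leqslant(\nu-f)\max(\lambda,\mu)$) is sound, the Rayleigh--Ritz estimate $2e(F)\leqslant kf^2/\nu+rf(\nu-f)/\nu$ is the usual Hoffman-type bound and uses only that $r$ is the largest eigenvalue on $\mathbf{1}^\perp$ (valid since the graph is connected, so $k$ is simple), and the resulting quadratic $(k-r)f^2-\nu(k+m-r)f+\nu^2m\geqslant0$ does factor as $(k-r)(f-\nu)\bigl(f-\nu m/(k-r)\bigr)$, as one checks by expanding or via the discriminant $\nu^2(k-r-m)^2$. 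Since $\pi$ is non-trivial, $f<\nu$, and the bound follows. This is essentially equivalent to the interlacing argument on the quotient matrix of the partition $\{F,V(G)\setminus F\}$ used in \cite{beh}; the only cosmetic difference is that you work with the Rayleigh quotient of $\chi_F$ directly rather than with the quotient matrix, which amounts to the same inequality.
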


\begin{cor}  \label{fixg}
Each  non-trivial automorphism of $\mathbb{G}$ has  at most $\nu/4$ f{}ixed points.
\end{cor}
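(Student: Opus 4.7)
The plan is a direct application of Theorem \ref{fixsrg} followed by a routine numerical comparison; there is no substantive combinatorial obstacle. I would first identify the relevant parameters of $\mathbb{G}$ as an ${\mathsf{SRG}}(\nu,k,\lambda,\mu)$ in the family (\ref{g=k}) with $\lambda=2$: we have $\nu=(n^2+3n-2)^2$, $k=n(n^2+3n-1)$, and $\mu=n(n+1)$. From the description of the eigenvalues just after (\ref{g=k}), the two non-principal eigenvalues of $\mathbb{G}$ are $n$ and $2-n^2-2n$, and since $n\geqslant 3$ the second largest eigenvalue is $r=n$. Also, $\max(\lambda,\mu)=\mu=n(n+1)$ in our range.

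Next I would compute
\[
k-r \;=\; n(n^2+3n-1)-n \;=\; n(n^2+3n-2),
\]
so the bound from Theorem \ref{fixsrg} specializes to
\[
\frac{\nu}{k-r}\max(\lambda,\mu) \;=\; \frac{(n^2+3n-2)^2}{n(n^2+3n-2)}\cdot n(n+1) \;=\; (n+1)(n^2+3n-2).
\]

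It then remains to check $(n+1)(n^2+3n-2)\leqslant\nu/4=(n^2+3n-2)^2/4$. Since $n^2+3n-2>0$ for $n\geqslant 3$, this is equivalent to $4(n+1)\leqslant n^2+3n-2$, i.e.\ to $(n-3)(n+2)\geqslant 0$, which holds for every integer $n\geqslant 3$ (with equality exactly when $n=3$). The only ``step'' of any weight is noticing that in this particular family the factor $k-r$ is large enough that the general bound of Theorem \ref{fixsrg} collapses to the desired $\nu/4$.
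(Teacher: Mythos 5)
Your proposal is correct and is exactly the intended derivation: the paper states Corollary \ref{fixg} without proof as an immediate consequence of Theorem \ref{fixsrg}, and your computation of $k-r=n(n^2+3n-2)$, the resulting bound $(n+1)(n^2+3n-2)$, and the comparison $4(n+1)\leqslant n^2+3n-2$ for $n\geqslant 3$ (with equality at $n=3$) supplies precisely the omitted details. Nothing further is needed.
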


\begin{lem}\label{order2}
For every  two   vertices $u_1, u_2\in V(\mathbb{G})$,   $(\sigma_{u_1}\sigma^{-1}_{u_2})^2$ is equal to the   identity.
\end{lem}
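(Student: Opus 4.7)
Let $\pi = (\sigma_{u_1}\sigma_{u_2}^{-1})^2$. Since $\pi$ is an automorphism of $\mathbb{G}$, Corollary \ref{fixg} reduces the claim to showing that $\pi$ has more than $\nu/4$ fixed points; in fact, the plan is to prove $\pi$ fixes every vertex of $\mathbb{G}$.

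First I would extract four explicit fixed points. Set $v = \sigma_{u_2}(u_1)$ and $w = \sigma_{u_1}(u_2)$. By Lemma \ref{invers}, $v = \sigma_{u_1}^{-1}(u_2)$ and $w = \sigma_{u_2}^{-1}(u_1)$, so the $\sigma_{u_1}$-orbit of $u_2$ is $\{u_2, w, v\}$ and the $\sigma_{u_2}$-orbit of $u_1$ is $\{u_1, v, w\}$. A direct calculation then shows that on the set $\{u_1, u_2, v, w\}$, the permutation $\sigma_{u_1}\sigma_{u_2}^{-1}$ acts as the double transposition $(u_1\,v)(u_2\,w)$; in particular, $\pi$ fixes each of $u_1, u_2, v, w$.

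Next, for an arbitrary $x \in V(\mathbb{G})$, I would show $\pi(x) = x$. The equation $\pi(x) = x$ is equivalent to $\sigma_{u_1}\sigma_{u_2}^{-1}(x) = \sigma_{u_2}\sigma_{u_1}^{-1}(x)$, and invoking Lemma \ref{invers} to rewrite $\sigma_{u_j}^{-1}(x) = \sigma_x(u_j)$, this reduces to the identity
\[
\sigma_{u_1}\sigma_x(u_2) = \sigma_{u_2}\sigma_x(u_1).
\]
I would verify this by a case analysis on the adjacencies among $u_1, u_2, x$, parallel to the seven-case treatment in the proof of Lemma \ref{invers}. In each case, an auxiliary witness $y$ (a common neighbour of two of the three, a vertex in some $M_0(\cdot,\cdot)$, or a matched partner of a triple in $\mathnormal{\Phi}$ or $\mathnormal{\Psi}$) is used to transfer the identity from a simpler configuration, with Lemmas \ref{cicj}, \ref{cij2}, and \ref{2cliq} pinning down the precise matching and Lemma \ref{invers} replacing $\sigma_{u_j}^{\pm1}$ by $\sigma_y^{\mp1}$ at the key moment.

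The principal obstacle is the bookkeeping of these subcases: the orientations of the $3$-cycles induced by $\sigma_{u_1}$, $\sigma_{u_2}$, and $\sigma_x$ must be tracked with care, so that one arrives at $\tau(x) = \tau^{-1}(x)$ with $\tau = \sigma_{u_1}\sigma_{u_2}^{-1}$, rather than some spurious relation. As in Lemma \ref{invers}, the hardest subcase is the one where $u_1, u_2, x$ are mutually non-adjacent and $N(u_1,u_2) \cap (N(u_1,x) \cup N(u_2,x))$ fails to furnish a direct witness, which forces a cascade of auxiliary vertices bringing us back to the previously handled cases. Once the identity is verified in every case, $\pi = e$, and the lemma follows. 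Alternatively, even if the case analysis only treats a suitably large subset of $V(\mathbb{G})$, Corollary \ref{fixg} still delivers $\pi = e$ as soon as the resulting fixed-point set exceeds $\nu/4$ in cardinality.
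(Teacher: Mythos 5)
Your setup is sound as far as it goes: the reduction of $\pi(x)=x$ to the identity $\sigma_{u_1}\sigma_x(u_2)=\sigma_{u_2}\sigma_x(u_1)$ via Lemma \ref{invers} is correct, and your computation that $\sigma_{u_1}\sigma_{u_2}^{-1}$ acts as $(u_1\,v)(u_2\,w)$ on $\{u_1,u_2,v,w\}$ (which is exactly the ``related'' $4$-set through $u_1$ and $u_2$ in the paper's terminology) checks out. But the proof stops where the real difficulty begins. The entire content of the lemma is the verification of $\sigma_{u_1}\sigma_x(u_2)=\sigma_{u_2}\sigma_x(u_1)$ for the remaining $\nu-4$ vertices $x$, and for this you offer only the announcement of a case analysis ``parallel to'' the one in Lemma \ref{invers}, without executing a single case. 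This is not a bookkeeping matter: Lemma \ref{invers} is a relation between two automorphisms evaluated at a point, whereas the identity you need is a commutation-type relation among three automorphisms, and there is no evidence that the anchored-family device of Lemma \ref{invers} transfers to it. Your fallback (``fix more than $\nu/4$ vertices and invoke Corollary \ref{fixg}'') is likewise only a plan; four explicit fixed points fall far short of $\nu/4$.

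The paper's actual argument is entirely different and avoids any pointwise verification of $\pi(x)=x$. It extends $\{u_1,u_2\}$ to the related set $U=\{u_1,u_2,u_3,u_4\}$, sets $\rho_{ij}=\sigma_{u_i}\sigma_{u_j}^{-1}$, and uses Lemma \ref{invers} together with the uniqueness of the related set through two given vertices to partition $V(\mathbb{G})\setminus U$ into related $4$-sets $\EuScript{H}_x$, each invariant under all the $\rho_{ij}$. Assuming $\rho_{12}\neq\rho_{21}$, on each block $\EuScript{H}_x$ the map $\rho_{12}^2$ is either the identity or fixed-point-free, so Corollary \ref{fixg} forces the latter on at least $\frac{3}{16}\nu-1$ blocks; on each such block one of $\rho_{12}\rho_{13}$ or $\rho_{12}\rho_{14}$ is shown to restrict to the identity, and a pigeonhole argument then yields a non-trivial automorphism with at least $\frac{3}{8}\nu-2>\nu/4$ fixed points, contradicting Corollary \ref{fixg}. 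In short, the fixed-point bound is applied to the auxiliary automorphisms $\rho_{12}\rho_{1j}$, not to $\pi$ itself. Without either this global counting argument or a genuinely worked-out case analysis, your proposal does not establish the lemma.
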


\begin{proof}{
For four distinct  vertices $a, b, c, d\in V(\mathbb{G})$,
we call the set  $\{a, b, c, d\}$ to be  {\sl related}  if  either  it is a clique or it is an independent set with  $M_0(a, b)=\{c, d\}$.  Note that every  two distinct  vertices  of $\mathbb{G}$ is contained in a unique related set. Let  $U=\{u_1, u_2, u_3, u_4\}$  be a related set and let    $\rho_{ij}=\sigma_{u_i}\sigma^{-1}_{u_j}$, for every $i, j\in\{1, 2, 3, 4\}$. Consider a vertex $x\in V(\mathbb{G})\setminus U$. By   Lemma \ref{invers}, we f{}ind that $\sigma^{-1}_{\sigma^{-1}_{u_i}(x)}(U)=\{\rho_{1i}(x), \rho_{2i}(x), \rho_{3i}(x), \rho_{4i}(x)\}$ and $\sigma_{u_j}\sigma_x(U)=\{\rho_{j1}(x), \rho_{j2}(x), \rho_{j3}(x), \rho_{j4}(x)\}$ are related, for every $i, j\in\{1, 2, 3, 4\}$. Since every two distinct  vertices  of $\mathbb{G}$ is contained in a unique related  set, it is easily seen that $\sigma^{-1}_{\sigma^{-1}_{u_i}(x)}(U)=\sigma_{u_j}\sigma_x(U)$, for every indices $i\neq j$.
It follows that the  eight sets which we associated to $x$ in the above are the same. Denote the common   set  by $\EuScript{H}_x$.  Note that if $y\in\EuScript{H}_x$, then $\EuScript{H}_x=\EuScript{H}_y$. Therefore,   $\mathscr{P}=\{\EuScript{H}_x \, | \, x\in V(\mathbb{G})\setminus U\}$ is clearly a partition of  $V(\mathbb{G})\setminus U$ into related  sets.

Working towards a contradiction, suppose  that $\rho_{12}\neq \rho_{21}$. Consider an arbitrary element $\EuScript{H}_x\in\mathscr{P}$.
Since $\rho^2_{12}$ is a permutation  on  $\EuScript{H}_x$, $|\EuScript{H}_x|=4$, and $\rho_{12}(x)\neq x$, we obviously deduce that   either $\rho^2_{12}$ has no f{}ixed point in   $\EuScript{H}_x$ or   $\rho^2_{12}$ is the identity      on  $\EuScript{H}_x$. Thus, Corollary \ref{fixg} shows that $\rho^2_{12}$ has no f{}ixed point in at least $\frac{3}{16}\nu-1$ elements of $\mathscr{P}$.

Assume that  $\rho^2_{12}$ f{}ixes no  element of $\EuScript{H}_x=\{x, \rho_{12}(x), \rho_{13}(x), \rho_{14}(x)\}$. So   $\rho_{12}(x)\neq \rho_{21}(x)$.  We claim  that one of  $\rho_{12}\rho_{13}$ or $\rho_{12}\rho_{14}$ is the identity  on  $\EuScript{H}_x$. Note that by Lemma \ref{invers}, $\rho_{ij}(x)\neq \rho_{ij'}(x)$ and  $\rho_{ij}(x)\neq \rho_{i'j}(x)$ whenever  $i\neq i'$ and $j\neq j'$. We clearly have $\rho_{21}(x)\in\{\rho_{13}(x), \rho_{14}(x)\}$.  Suppose that  $\rho_{21}(x)=\rho_{13}(x)$. Since the  eight sets which we associated to $x$ in the f{}irst paragraph  of the proof are equal, one concludes that  the elements of   $\EuScript{H}_x\setminus\{x\}$ are
\begin{eqnarray*}
\left\{\begin{array}{ccc}
\rho_{12}(x)=\rho_{24}(x)=\rho_{31}(x), \\  \rho_{13}(x)=\rho_{21}(x)=\rho_{34}(x),\\  \rho_{14}(x)=\rho_{23}(x)=\rho_{32}(x).
\end{array}\right.
\end{eqnarray*}
It is then easy to check that  $\rho_{12}\rho_{13}$  is the identity  on  $\EuScript{H}_x$. With a similar argument, one deduces that if  $\rho_{21}(x)=\rho_{14}(x)$, then  $\rho_{12}\rho_{14}$  is the identity  on  $\EuScript{H}_x$. This establishes the claim.

Note that none of  $\rho_{12}\rho_{13}$ and  $\rho_{12}\rho_{14}$ are trivial. For instance,   if $\rho_{12}\rho_{13}(u_1)=u_1$,
then $\sigma^{-1}_{u_2}\sigma_{u_1}\sigma^{-1}_{u_3}(u_1)=u_1$ and so by Lemma \ref{invers}, we f{}ind  that
$\sigma_{u_2}(u_1)=\sigma_{u_1}\sigma^{-1}_{u_3}(u_1)=\sigma^{-1}_{u_1}(u_3)=\sigma_{u_3}(u_1)$,
which means that  $u_2=u_3$, a contradiction.
Therefore,   one of $\rho_{12}\rho_{13}$ or   $\rho_{12}\rho_{14}$ is a non-trivial  automorphism of $\mathbb{G}$ which is the identity on  at least $\frac{3}{32}\nu-\frac{1}{2}$ elements of $\mathscr{P}$.   It follows from  Corollary \ref{fixg} that $\frac{3}{8}\nu-2\leqslant\frac{1}{4}\nu$, which it contradicts  $n\geqslant3$.
}\end{proof}

\begin{lem}\label{gorder}
The group $\mathnormal{\Gamma}$  generated  by   $\{\sigma_u\sigma^{-1}_v\, |\, u, v\in V(\mathbb{G})\}$ is Abelian and it acts transitively on $V(\mathbb{G})$.
\end{lem}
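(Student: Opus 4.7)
The strategy is to show, for any fixed reference vertex $z\in V(\mathbb{G})$, that $\mathnormal{\Gamma}$ coincides with the set $\mathnormal{\Gamma}_0=\{\sigma_u\sigma_z^{-1}\,|\,u\in V(\mathbb{G})\}$; once this is in hand, both properties fall out of Lemma~\ref{order2}. The engine of the argument is that Lemma~\ref{order2} rewrites as
\[
\sigma_u\sigma_v^{-1}=\sigma_v\sigma_u^{-1}\qquad\text{for all }u,v\in V(\mathbb{G}),
\]
simply because each $\sigma_u\sigma_v^{-1}$ is an involution and hence its own inverse. I would use this identity to ``slide'' $\sigma_z^{-1}$ past $\sigma_b$ and collapse an inner pair.

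Applying it to the second factor in a product yields
\[
(\sigma_a\sigma_z^{-1})(\sigma_b\sigma_z^{-1})=(\sigma_a\sigma_z^{-1})(\sigma_z\sigma_b^{-1})=\sigma_a\sigma_b^{-1},
\]
so $\mathnormal{\Gamma}_0$ is closed under multiplication; being finite and containing $\sigma_z\sigma_z^{-1}=\mathrm{id}$, it is a subgroup of $\mathnormal{\Gamma}$. The same computation expresses every generator $\sigma_a\sigma_b^{-1}$ of $\mathnormal{\Gamma}$ as $(\sigma_a\sigma_z^{-1})(\sigma_b\sigma_z^{-1})\in\mathnormal{\Gamma}_0$, so $\mathnormal{\Gamma}=\mathnormal{\Gamma}_0$. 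Commutativity then follows from one more application of the same identity:
\[
(\sigma_a\sigma_z^{-1})(\sigma_b\sigma_z^{-1})=\sigma_a\sigma_b^{-1}=\sigma_b\sigma_a^{-1}=(\sigma_b\sigma_z^{-1})(\sigma_a\sigma_z^{-1}).
\]

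For transitivity, I would examine the orbit of $z$ under $\mathnormal{\Gamma}=\mathnormal{\Gamma}_0$: it contains every vertex of the form $\sigma_u\sigma_z^{-1}(z)=\sigma_u(z)$, which by Lemma~\ref{invers} equals $\sigma_z^{-1}(u)$. Since $\sigma_z^{-1}$ is a bijection of $V(\mathbb{G})$, as $u$ varies the image $\sigma_z^{-1}(u)$ sweeps out all of $V(\mathbb{G})$, and the action is transitive. I do not anticipate any real obstacle here: the entire proof rests on spotting the sliding identity extracted from Lemma~\ref{order2}, after which every step is a one-line manipulation.
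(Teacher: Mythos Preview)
Your argument is correct and uses precisely the same two inputs as the paper: the involution identity $\sigma_u\sigma_v^{-1}=\sigma_v\sigma_u^{-1}$ extracted from Lemma~\ref{order2}, and Lemma~\ref{invers} for transitivity. The paper checks commutativity directly on an arbitrary pair of generators via
\[
(\sigma_u\sigma_v^{-1})(\sigma_x\sigma_y^{-1})=\sigma_u\sigma_x^{-1}\sigma_v\sigma_y^{-1}=\sigma_x\sigma_u^{-1}\sigma_y\sigma_v^{-1}=(\sigma_x\sigma_y^{-1})(\sigma_u\sigma_v^{-1}),
\]
and exhibits $\sigma_v\sigma^{-1}_{\sigma_u^{-1}(v)}$ as an element sending $u$ to $v$; you instead first identify $\mathnormal{\Gamma}$ with the set $\mathnormal{\Gamma}_0=\{\sigma_u\sigma_z^{-1}\,|\,u\in V(\mathbb{G})\}$ and read off both conclusions from that. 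This is only a minor reorganization, though your route has the pleasant by-product of showing $|\mathnormal{\Gamma}|\leqslant\nu$, hence $|\mathnormal{\Gamma}|=\nu$ once transitivity is known---a fact the paper does not isolate here.
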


\begin{proof}{
Consider the  arbitrary vertices $u, v, x, y\in V(\mathbb{G})$.  By  Lemma \ref{invers}, $\sigma_v \sigma^{-1}_{\sigma^{-1}_u(v)}(u)=v$, meaning  that $\mathnormal{\Gamma}$ acts transitively on $V(\mathbb{G})$.
Applying  Lemma \ref{order2}, we  have $(\sigma_u\sigma^{-1}_v)(\sigma_x\sigma^{-1}_y)=\sigma_u\sigma^{-1}_x\sigma_v\sigma^{-1}_y=
\sigma_x\sigma^{-1}_u\sigma_y\sigma^{-1}_v=(\sigma_x\sigma^{-1}_y)(\sigma_u\sigma^{-1}_v)$. So,  $\mathnormal{\Gamma}$ is Abelian.
}\end{proof}

\begin{lem}\label{mthm}
The order of $\mathbb{G}$ is either $256$ or $16384$.
\end{lem}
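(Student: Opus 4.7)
The plan is to combine the group-theoretic structure provided by Lemmas \ref{order2} and \ref{gorder} with a Diophantine argument. By Lemma \ref{order2}, every generator $\sigma_u\sigma_v^{-1}$ of $\mathnormal{\Gamma}$ has order at most $2$, and by Lemma \ref{gorder}, $\mathnormal{\Gamma}$ is Abelian. Hence every element of $\mathnormal{\Gamma}$ is a product of commuting elements of order at most $2$, so $\mathnormal{\Gamma}$ itself is an elementary Abelian $2$-group. Since $\mathnormal{\Gamma}$ is a subgroup of the automorphism group of $\mathbb{G}$, its action on $V(\mathbb{G})$ is faithful; being Abelian and transitive, the action is automatically regular, because the stabilizer of any vertex is normal, hence fixes every vertex by transitivity, hence is trivial. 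Therefore $|\mathnormal{\Gamma}| = \nu = (n^2+3n-2)^2$, and this number is a power of $2$.

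Writing $(n^2+3n-2)^2 = 2^d$, we obtain $n^2+3n-2 = 2^e$ for some $e \geqslant 0$. Multiplying by $4$ and completing the square yields $(2n+3)^2 - 17 = 2^{e+2}$. Setting $x = 2n+3$ (odd and at least $9$, since $n \geqslant 3$) and $y = e+2$, this becomes
\[
x^2 - 2^y = 17.
\]
The desired conclusion $\nu \in \{256, 16384\}$ is exactly $(x,y) \in \{(9,6),(23,9)\}$, so the task reduces to enumerating the solutions of this equation with $x$ odd and $x \geqslant 9$.

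For this last step I would factor $x^2 - 1 = 16(2^{y-4}+1)$ and use $\gcd(x-1,x+1) = 2$ to show that exactly one of $x \pm 1$ is of the form $2\cdot(\text{odd})$ while the other is $8\cdot(\text{odd})$. Parametrizing by an odd positive integer $t$, this reduces to solving
\[
4t^2 + t - 1 = 2^{y-4} \quad \text{or} \quad 4t^2 - t - 1 = 2^{y-4},
\]
where the first equation admits $t = 1$, giving $(x,y) = (9,6)$ and $n = 3$, and the second admits $t = 3$, giving $(x,y) = (23,9)$ and $n = 10$. The main obstacle is ruling out all other odd $t$: iterated $2$-adic congruences modulo $16$, $32$, $64$, and so on confine $t$ to increasingly narrow residue classes, and combining these with a congruence modulo a well-chosen odd prime should eliminate all but the two small solutions above. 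Alternatively, one may invoke the classical complete enumeration of solutions of the Lebesgue--Nagell equation $x^2 - 2^y = 17$, namely $(x,y) \in \{(5,3),(7,5),(9,6),(23,9)\}$, of which only the last two satisfy $x \geqslant 9$. This Diophantine elimination is the genuinely substantive part of the argument; the group-theoretic passage to $\nu = 2^d$ follows almost immediately from the preceding lemmas.
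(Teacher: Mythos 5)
Your argument follows essentially the same route as the paper: Lemmas \ref{order2} and \ref{gorder} make $\mathnormal{\Gamma}$ an elementary Abelian $2$-group acting transitively (the paper only needs $\nu\mid|\mathnormal{\Gamma}|$ via orbit--stabilizer, whereas you sharpen this to regularity, which is fine but not necessary), so $\nu$ is a power of $2$, and the problem reduces to the equation $(2n+3)^2-17=2^{t+2}$. The one point where you diverge is the resolution of this Ramanujan--Nagell-type equation, and there your primary plan is a genuine gap: the suggestion that iterated $2$-adic congruences together with a congruence modulo one well-chosen odd prime ``should'' isolate $t=1$ and $t=3$ is not a proof and is very unlikely to be completable. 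The equation $x^2-2^y=17$ has four solutions spread over several residue classes, and equations of this type are the classical examples where congruence sieves fail; the known complete enumeration rests on Beukers' hypergeometric-method bounds, not on elementary congruences. Your fallback --- simply invoking the known list $(x,y)\in\{(5,3),(7,5),(9,6),(23,9)\}$ --- is exactly what the paper does (citing Beukers, \emph{Acta Arith.} 38 (1980/81), p.~401), and is the route you should take; with that citation in place of the congruence sketch, your proof is correct and coincides with the paper's.
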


\begin{proof}{
Applying   Lemmas \ref{order2}  and \ref{gorder}, we f{}ind that $\mathbb{G}$ admits a transitive  automorphism group whose order is a power of $2$. It follows from the orbit-stabilizer theorem  that   $n^2+3n-2=2^t$, for some   integer $t$. We have   $(2n+3)^2=2^{t+2}+17$. Using  a result in    \cite[p.\,401]{beu}, we obtain that  $(n, t)\in\{(1,  1), (2, 3), (3, 4), (10, 7)\}$. Since $n\geqslant 3$, we  conclude that   $(n,\nu)\in\{(3, 256), (10, 16384)\}$.
}\end{proof}

Now, the proof of Theorem \ref{main} is f{}inally completed after proving  Lemma \ref{mthm}. Notice  that we employed   the assumption (\ref{con}) only in the proof of Lemma \ref{sol}. As mentioned before, we believe  that (\ref{con})    automatically holds for  any   diamond-free ${\mathsf{SRG}}((n^2+3n-2)^2, n(n^2+3n-1), 2, n(n+1))$.

\section*{IV. Partial Quadrangle {\sf PQ}(3, 35, 20)}

In the following, we demonstrate  that there exists no  ${\mathsf{PQ}}(3, 35, 20)$, or  equivalently, there is  no diamond-free  ${\mathsf{SRG}}(676, 108, 2, 20)$. Notice  that this strongly regular graph belongs to the family (\ref{g=k}) with $n=4$ and $\lambda=2$.

\begin{thm}\label{n=4}
There exists   no diamond-free  ${\mathsf{SRG}}(676, 108, 2, 20)$.
\end{thm}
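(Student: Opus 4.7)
The plan is to reduce to the framework developed in Section~III. The parameters $(676,108,2,20)$ lie in the family (\ref{g=k}) with $n=4$ and $\lambda=2$, so every local identity used throughout Section~III is available at these values; what is missing is condition (\ref{con}). Once (\ref{con}) is verified for any putative diamond-free ${\mathsf{SRG}}(676,108,2,20)$, the full machinery of Lemmas~\ref{sol}--\ref{gorder} applies with $n=4$, and Lemma~\ref{mthm} forces $n^{2}+3n-2=26$ to be a power of~$2$, which is patently false. The proof therefore reduces to the self-contained verification of (\ref{con}) at $n=4$.

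To verify (\ref{con}), I would argue by contradiction. Suppose some non-adjacent pair $u\nsim v$ has $m_{0}(u,v)=0$. Substituting $n=4$, $\lambda=2$, $\mu=20$, $k=108$, $\nu=676$ into (\ref{sys}) yields
\begin{equation*}
\sum_{i=0}^{6}m_{i}(u,v)=478,\quad\sum_{i=0}^{6}i\,m_{i}(u,v)=2040,\quad\sum_{i=0}^{6}\binom{i}{2}m_{i}(u,v)=3420,
\end{equation*}
while the local identity $3p_{u}(v,x)+q_{u}(v,x)=20$ (valid for $x\in\overline{N}(u)\setminus\{v\}$ with $v\nsim x$) controls how every non-neighbour of $u$ attaches to $N(u)\cap N(v)$. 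Since each of the $36$ triangles of $\mathnormal{\Phi}(u)$ meets $N(v)$ in at most one vertex (any two would already force a diamond on $\{u,v\}$ together with that edge), $N(u)\cap N(v)$ distinguishes exactly $20$ triangles of $\mathnormal{\Phi}(u)$. I would refine the system by double counting pairs $(x,\phi)$ where $x\in M_{i}(u,v)$ and $\phi\in\mathnormal{\Phi}(u)$ meets both $N(v)$ and $N(x)$, and separately by counting edges between the $20$ distinguished triangles and $\mathnormal{\Phi}(x)$ for $x\in\overline{N}(u)$.

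The crux is to convert $m_{0}(u,v)=0$ into an arithmetic impossibility. A variance bound on the three equations already forces the distribution of the $p_{u}(v,x)$ to be sharply concentrated around $20/3$, so $m_{1}, m_{2}, m_{3}$ must each be small; combined with the rigid $K_{3}$-decomposition of $\mathnormal{\Phi}(u)$ and the diamond-free condition, this should pin down the admissible $m_{i}$ in a form that contradicts $m_{0}=0$, likely by either yielding a negative count or by producing a configuration on four vertices with five edges. The principal obstacle is that every one of Steps~1--5 in the proof of Lemma~\ref{sol} takes a vertex of $M_{0}(u,v)$ as its anchor, so the present bookkeeping must be rebuilt without that anchor; finding a workable substitute, probably a carefully chosen pair inside some $M_{i}$ with $i\geq 4$ whose existence is guaranteed by the three equations above, is where the real difficulty lies. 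Once (\ref{con}) is recovered, Lemma~\ref{mthm} instantly delivers the impossible equation $26=2^{t}$ and completes the proof.
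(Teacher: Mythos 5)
Your reduction to Theorem \ref{main} is sound in outline: at $n=4$ the parameters are $(676,108,2,20)$, and since $n^{2}+3n-2=26$ is not a power of $2$, Lemma \ref{mthm} would finish the job \emph{provided} condition (\ref{con}) holds. But that proviso is exactly where your argument has a genuine gap: you never prove that $m_{0}(u,v)\neq 0$ for every non-adjacent pair, you only describe a programme for doing so, and the one concrete mechanism you name cannot work. From (\ref{sys}) at $n=4$ one gets $\sum m_{i}=478$, $\sum i\,m_{i}=2040$, $\sum\binom{i}{2}m_{i}=3420$, hence $\sum i^{2}m_{i}=8880$ and the minimum of $\sum(i-c)^{2}m_{i}$ is $8880-2040^{2}/478\approx 174$, attained at $c=2040/478\approx 4.27$ (not at $20/3$ as you assert). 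A second-moment bound of this kind therefore concentrates the values $p_{u}(v,x)$ near $4.27$ and yields only an \emph{upper} bound $m_{0}\leqslant 9$; no variance argument can deliver the \emph{lower} bound $m_{0}\geqslant 1$ that (\ref{con}) demands. You yourself note that every step of the proof of Lemma \ref{sol} is anchored at a vertex of $M_{0}(u,v)$, so the Section III machinery cannot be bootstrapped to supply (\ref{con}); the authors explicitly leave the general validity of (\ref{con}) as a belief, not a theorem. As written, your argument establishes nothing beyond the conditional statement already contained in Theorem \ref{main}.

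For comparison, the paper's proof bypasses (\ref{con}) entirely with a short self-contained count: fix $u\nsim v$, take $w\in N(u,v)$, and let $\{v,v',v'',w\}$ be the clique obtained from the triangle of $\mathnormal{\Phi}(v)$ containing $w$ (diamond-freeness forces $v',v''\notin N(u)$). Letting $s_{i}$ be the number of triangles of $\mathnormal{\Phi}(u)$ sending exactly $i$ edges to $\{v,v',v''\}$, one gets $s_{0}+s_{1}+s_{2}+s_{3}=35$, $s_{1}+2s_{2}+3s_{3}=57$, $s_{2}+3s_{3}=21$, whence $s_{0}=-s_{3}-1<0$, a contradiction. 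To repair your proof you would need either an argument of this direct kind or an actual verification of (\ref{con}) at $n=4$; neither is present in what you wrote.
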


\begin{proof}{
Suppose, toward a contradiction, that $G$ is a  diamond-free  ${\mathsf{SRG}}(676, 108, 2, 20)$. Consider two non-adjacent vertices  $u, v\in V(G)$. Since $G$  is diamond-free, there are  vertices $w\in N(u)$ and $v', v''\in\overline{N}(u)$ such that $\{v, v', v'', w\}$ is a clique.  For $i=0, 1, 2, 3$, assume that $s_i$ is the number of  cliques $\mathnormal{\Omega}$ in $N(u)$ of size $3$ such that $\langle\mathnormal{\Omega}, \{v, v', v''\}\rangle$ has $i$ edges. By a double counting argument, we  f{}ind that
\begin{eqnarray*}\label{sub}
\left\{\begin{array}{l}
s_0+s_1+s_2+s_3=35;\\
s_1+2s_2+3s_3=57; \\
s_2+3s_3=21,
\end{array}\right.
\end{eqnarray*}
which gives $s_0=-s_3-1$, a contradiction.
}\end{proof}

\section*{Acknowledgements}

The research of the f{}irst author  was in part supported by a grant from IPM (No. 91050405).

{}
\end{document}